\documentclass[reqno]{amsart}
\usepackage[utf8]{inputenc}

\usepackage[utf8]{inputenc}

\usepackage{sky}
\usepackage{graphicx}
\usepackage{color}
\usepackage{caption}
\usepackage{subcaption}

\newcommand{\vertiii}[1]{{\left\vert\kern-0.25ex\left\vert\kern-0.25ex\left\vert #1 
\right\vert\kern-0.25ex\right\vert\kern-0.25ex\right\vert}}

\title{Deconfinement For $\mathrm{SO}(3)$ Lattice Yang--Mills at Strong Coupling}

\author{Ron Nissim}
\address{Ron Nissim, Department of Mathematics, Massachusetts Institute of Technology, Cambridge, MA 02139}
\email{rnissim@mit.edu}

\date{}

\begin{document}

\begin{abstract}
    We make rigorous the physics prediction that lattice Yang--Mills theories with gauge groups which have trivial centers do not satisfy Wilson's criterion for quark confinement. Specifically we prove that $\mrm{SO}(3)$ lattice Yang--Mills theory does not satisfy Wilson's criterion in a strong coupling regime.
\end{abstract}

\maketitle

\tableofcontents

\section{Introduction}

Euclidean quantum Yang--Mills theory is the mathematical framework for the standard model of particle physics. In order to initiate a rigorous mathematical treatment of the subject, \cite{Wilson1974} introduced a lattice version of the model referred to as lattice Yang--Mills theory. The data of the model is a compact Lie group $G$ known as the \textit{gauge group}, a finite subgraph $\Lambda \subset \Z^d$, and an \textit{inverse coupling strength} $\beta>0$. Given this data, there is an associated lattice Yang-Mills probability measure $\mu_{G,\Lambda,\beta}$ on the space of assignments, $Q:E_{\Lambda}^+ \to G$, of group elements to each (positively oriented) edge. We refer the reader to \cite{chatterjee2016a} for a survey of the model and the important open problems in the subject.

A fundamental physical phenomenon which motivated Wilson's introduction of lattice Yang--Mills is the problem of \textit{quark confinement}. In particular, in \cite{Wilson1974} Wilson argued that according quantum chromodynamics (Yang--Mills theory with $G=\mrm{SU}(3)$) the static potential between a quark and antiquark separated by a distance $R$ is given by the formula
\begin{equs}\label{eq:quark-antiquark-potential}
    V(R) := -\lim_{T \to \infty} \frac{1}{T} \log \, \langle W_{\ell_{R,T}} \rangle_{\mu_{G,\Lambda,\beta}},
\end{equs}
where the expectation is taken with respect to the lattice Yang--Mills measure, $\ell_{R,T}$ is an oriented rectangle of dimensions $R$ and $T$, and  $W_{\ell_{R,T}}$ denotes the Wilson loop observable as defined in \eqref{eq:Wilson-loop-observable}. This limit is known to exist in the strong coupling regime as reviewed in Subsection \ref{subsection:Ref-Pos}). Wilson' criterion for quark confinement is that $V(R) \to \infty$ as $R \to \infty$.

Many lattice Yang--Mills theories are known to satisfies a stronger form of Wilson's criterion called the \textit{area law}. A lattice Yang--Mills theory obeys an area law if there exist constants, $c_1,c_2>0$ such that for all rectangular loops $\ell$,
\begin{equs}
    |\langle W_{\ell}\rangle_{\mu_{G,\Lambda,\beta}}| \leq c_1 e^{-c_2 \mrm{area}(\ell)}.
\end{equs}
This bound implies that the quark-antiquark potential $V(R)$ grows linearly in $R$. In works of Osterwalder and Seiler based on cluster expansion, it was shown that for $G=\mrm{SU}(N)$ which has nontrivial center, at strong enough coupling (small enough $\beta$), the corresponding Yang--Mills model obeys an area law \cite{osterwalderseiler1978,seiler1982gauge}. Moreover in recent works the strong coupling regime for which the area holds has been substantially expanded for gauge groups $G \in \{\mrm{SU}(N), \mrm{U}(N), \mrm{SO}(2(N-1))\}$ and $N>1$ using methods based on gauge--string duality and based on rapid mixing of Langevin dynamics \cite{CNS2025a,CNS2025b}. All of the previously mentioned groups have nontrivial centers, and this was a key assumption used in each method of proof. 

In contrast this paper is concerned with groups with trivial center. In particular, throughout this paper the gauge group is always taken to be $G=\mrm{SO}(3)$. In this setting the prediction from the physics literature is very different. In this setting, Wilson's criterion for quark confinement is not expected to hold at all in dimensions $d \geq 3$, explaining why it is often argued that center symmetry is the mechanism for quark confinement (i.e. see the discussion in the introduction of \cite{chatterjee2021probabilistic}). Our main result verifies this prediction rigorously by showing that Wilson's criterion for confinement is not met in a strong coupling regime for $G=\mrm{SO}(3)$ lattice Yang--Mills theory. 

\begin{theorem}\label{thm:Main-theorem}
    For any $d \geq 3$, there exists $\beta^*(d)>0$ such that for almost every $\beta \in (0,\beta^*(d))$ (with respect to Lebesgue measure), the quark-antiquark potential $V(R)$ corresponding to $\mrm{SO}(3)$ lattice Yang--Mills on $\Z^d$ with inverse coupling parameter $\beta$ as defined in equation \eqref{eq:quark-antiquark-potential} is uniformly bounded. That is,
    \begin{equs}
        \sup_{R \in \N} V(R) < \infty.
    \end{equs}
\end{theorem}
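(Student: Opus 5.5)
Since $V(R)$ is defined through the $T\to\infty$ limit, which (as recalled from Subsection~\ref{subsection:Ref-Pos}) exists at strong coupling via reflection positivity and a transfer matrix, the plan is to show directly that $\langle W_{\ell_{R,T}}\rangle \ge c\, e^{-C T}$ with constants $c,C>0$ independent of $R$ (and of the volume). This is a \emph{perimeter law} in the $T$ direction, and it gives $V(R)\le C$ for every $R$.

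\textbf{Step 1: character expansion of the loop.} For $\mrm{SO}(3)$ the representations are integer spin $j=0,1,2,\dots$. The Wilson loop in the defining representation is $\chi_1 = 1+2\cos\theta$, with spin $1$ equal to the adjoint. Since the center is trivial, $1\otimes 1 \ni 1$, so a spin-$1$ line can be ``screened'' by gluons. Concretely, I would expand $e^{\beta \mathrm{Re}\,\mathrm{tr}\, Q_p}$ in characters, $\sum_j d_j a_j(\beta)\chi_j(Q_p)$ with $a_1(\beta)\sim\beta$. The nonvanishing surfaces bounded by $\ell$ then include thin ``tube'' configurations: the loop is matched by a spin-$1$ surface of plaquettes. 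This surface need not fill the rectangle. It can instead consist of two tubes, one along each long side of length $T$, plus end caps. Around a tube each plaquette carries spin $1$, and the invariant in $1\otimes1\otimes1$ (the $\epsilon$-tensor) lets a tube close on itself without any area term. The resulting weight is of order $(c\beta)^{O(T)}$ rather than $\beta^{RT}$.

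\textbf{Step 2: rigorous lower bound via the polymer expansion.} Write $\langle W_\ell\rangle = Z_\ell/Z$. Using the Osterwalder--Seiler cluster expansion at small $\beta$, express $\log Z_\ell - \log Z$ as a convergent sum over polymers that touch $\ell$. The leading contribution is the minimal ``perimeter'' configuration, with activity at least $(c\beta)^{K(R+T)}$. Corrections are controlled by cluster-expansion bounds that are linear in $|\ell|$. This gives $\log\langle W_\ell\rangle \ge -K'(R+T)\log(1/\beta)$. After dividing by $T$ and letting $T\to\infty$, the $R$-dependence disappears, so $V(R)\le K'\log(1/\beta)$ uniformly in $R$.

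\textbf{Main obstacle: positivity.} An expansion gives the size of terms, not their sign. Cancellations could make $\langle W_\ell\rangle$ far smaller than its leading term, or even zero. My first attempt would be the transfer-matrix picture: $V(R)$ is the lowest energy of the state created by the two static spin-$1$ charges. I would compare this with the state in which each charge is separately screened by a gluon bound to it (a ``gluelump''). Its energy is at most $2E_{\mathrm{gl}}$, and it has nonzero overlap with the string state because $1\otimes1\supset 0$. The obstacle is to show that this overlap is bounded below uniformly in $R$, which requires a mass gap / analyticity statement for the Hamiltonian. The ``almost every $\beta$'' restriction suggests that one needs to exclude exact degeneracies or cancellations of the leading coefficients, which are analytic in $\beta$ and vanish only on a null set. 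Accordingly, I would prove that the overlap coefficient is a nonzero analytic function of $\beta$ on $(0,\beta^*)$ and conclude for all $\beta$ off its zero set.
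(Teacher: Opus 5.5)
There is a genuine gap, and it sits exactly where you flag the ``main obstacle'': nothing in the proposal produces a lower bound on $\langle W_\ell\rangle$. Step~2 does not work as written. The cluster expansion writes $\langle W_\ell\rangle$ as a \emph{sum} over clusters $\mathcal{C}$ with $\mathcal{C}\cup\ell$ connected. It does not write $\log\langle W_\ell\rangle$ as a convergent sum of local polymers. Every nonvanishing term comes from one connected cluster of at least $4|\ell|-O(1)$ plaquettes wrapping the whole loop, so nothing exponentiates.

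Concretely, replace a single tube plaquette away from $\ell$ by the other five faces of a unit cube. This gives $O(|\ell|)$ clusters at order $\beta^{4|\ell|-12}$. Their total is roughly $|\ell|\beta^4$ times the tube term, with uncontrolled sign. For fixed $\beta$ and large loops these swamp the leading term, so ``leading term plus corrections linear in $|\ell|$'' is unjustified. This is precisely the gap in the Glimm--Jaffe and Seiler arguments discussed in Section~\ref{section:comments-on-cluster-expansion}. A lower bound of this type for all small $\beta$ would be a full perimeter law, which the paper does not obtain.

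The expansion honestly gives only the upper bound $|\langle W_\ell\rangle|\le C_d^{|\ell|}\beta^{4|\ell|-16}$ and the Taylor coefficients at $\beta=0$. That upper bound also shows your opening target $\langle W_{\ell_{R,T}}\rangle\ge c\,e^{-CT}$, with $c$ independent of $R$, is false at fixed $T$ as $R\to\infty$; the usable form is $c^{R+T}$. Your transfer-matrix fallback presupposes three things: a charged-sector transfer matrix with an isolated top eigenvalue whose eigenprojection is analytic in $\beta$, a rigorous gluelump variational bound uniform in $R$, and an overlap that is not identically zero. None of these is established, and they need spectral information far beyond what the cluster expansion provides.

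The missing idea is a \emph{quantitative} use of analyticity applied directly to the Wilson loop, where the cluster expansion supplies exactly the needed inputs. Qualitative nonvanishing would be useless: you need $\langle W_\ell\rangle\ge c^{|\ell|}$ with $c$ independent of $\ell$. The paper sets $\phi_\ell(\beta)=\beta^{16-4|\ell|}\langle W_\ell\rangle$. This is holomorphic on $\mathbb{D}(0,\beta^*)$ with $|\phi_\ell|\le C_d^{|\ell|}$ and $\phi_\ell(0)\ge 9^{-|\ell|}$. The last bound is an explicit computation: all terms with $|K|<4|\ell|-16$ vanish by the slab, balance and pinched-corner arguments. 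At order $4|\ell|-16$ only the $2(d-2)$ minimal tubes survive, each with the same positive value coming from the determinant-type third moment of $\mathrm{Haar}(\mathrm{SO}(3))$.

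Jensen's formula then bounds the number of zeros of $\phi_\ell$ in $\mathbb{D}(0,\beta^*/2)$ by $O(|\ell|)$. Harnack's inequality controls the zero-free factor, and Cartan's lemma controls the Blaschke product off disks of total radius $<\epsilon$. Hence $|\langle W_\ell\rangle|\ge c^{|\ell|}$ with $c=c(\beta,\epsilon,d)$ independent of $\ell$, for $\beta$ outside an $\ell$-dependent set of measure $<\epsilon$ (Lemmas~\ref{lm:Complex-Analysis-Lemma} and~\ref{lm:single-rectangle-bound}).

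The ``almost every'' then comes from a limsup-of-good-sets argument, first in $T$ and then in $R$, combined with reflection positivity. Reflection positivity gives $\langle W_{\ell_{R,T}}\rangle\ge 0$, concavity of $v$ (so $V(R)$ exists), and monotonicity of $V$ in $R$. Monotonicity upgrades $V(R)\le-2\log c$ for infinitely many $R$ to all $R$; finally let $\epsilon\to0$. No spectral information about charged sectors or gluelumps is needed.
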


\begin{remark}[Perimeter law lower bound and almost everywhere restriction]
    Unfortunately the proof we provide for Theorem \ref{thm:Main-theorem} does not seem to directly yield a uniform exponential in perimeter law lower bound for Wilson loop observables and we leave this as a problem for further investigation. This is related to the `almost every' restriction in Theorem \ref{thm:Main-theorem} which seems to be unavoidable through the current proof. The main obstruction is the inability to rule out that the Wilson loop observables  $\langle W_{\ell_{R,T}}\rangle$, as a function of $\beta$, may have zeros in the complex plane arbitrarily close to $(0,\beta^*(d))$ as the loop size gets larger (one can rule out zeros exactly on the interval $(0,\beta^*(d))$ using reflection positivity). If one could rule out these zeros then a perimeter law lower bound would follow on the whole interval $(0,\beta^*(d))$ using the proof strategy in this paper.
\end{remark}

\begin{remark}[Comments on prior results]
    In a paper of Glimm and Jaffe and a monograph of Seiler \cite{GlimmJaffeNonConfinement,seiler1982gauge}, it is claimed that under general representation theoretic conditions analogous to the trivial center assumption, the corresponding lattice Yang--Mills theory satisfy a perimeter law lower bound in a sufficiently strong coupling regime, however there seem to be non-trivial gaps in their arguments and this paper is meant to fill such gaps. The argument outlined in both works is that the cluster expansion for the Wilson loop observable is dominated by a single leading order term corresponding to several `tube' clusters at sufficiently strong coupling. This is not valid unless the coupling is made to depend on the loop size. To see this simply observe that for any fixed inverse coupling $\beta$, and for large enough loops, the terms in the cluster expansion corresponding to slight perturbations of the leading order tubes are often nonzero and swamp the leading order `tube' terms in contribution due to the massive number of terms corresponding to slight perturbations of the tubes which depends on the loop size. Moreover, due to such terms, it is entirely plausible that sign cancellations make the entire Wilson loop expectation identically $0$ or much smaller than expected. The necessity of handling all small possible excitations to minimal tube configurations, not considered in \cite{GlimmJaffeNonConfinement,seiler1982gauge}), is discussed further in Section \ref{section:comments-on-cluster-expansion}. 
\end{remark}  

\begin{remark}[Generalizations and physical interpretation]\label{rmk:physics}
    We only ever include Wilson loop observables in the fundamental representation. If one considers a more general representation for Wilson loops $\tau$, and if we let $A$ denote the adjoint representation of the gauge group $G$, then the expected criterion for deconfinement/screening with any gauge group $G$ and Wilson loop representation $\tau$ is that
    \begin{equs}
        \tau \times A^n
    \end{equs}
    contains a copy of the trivial representation for some large enough $n$. In principle, our proof should be adaptable to this more general case. The only part of the analysis which would significantly change is the analysis of the first nonzero cluster, Proposition \ref{lm:Tube-Lemma}. This representation theoretic condition has the physical interpretation that gluon pairs can be born out of a vacuum, screening the representation $\tau$.
\end{remark}

\begin{remark}[Weak coupling and relation to continuum limit]
    Theorem \ref{thm:Main-theorem} holds in a strong coupling regime. In order to pass such a result to the continuum one would need to establish the theorem in a weak coupling regime. In principle one would expect that deconfinement should also hold in the weak coupling regime as the screening phenomena described in remark \ref{rmk:physics} persists for all coupling, however proving this seems to be currently out of reach. in $d \geq 3$, Yang--Mills theories with non--Abelian gauge groups have not been rigorously constructed in the continuum, and in general there are very few results rigorously known even on the lattice in the weak coupling regime.
\end{remark}


\textit{Organization:} In section 2 we review basic notation and definitions for lattice Yang--Mills theory. Section 3 contains the proof of Theorem \ref{thm:Main-theorem} which is divided into three steps each given their own subsection. Step 1 uses some lemmas regarding expectation values with respect to the Haar measure on $\mrm{SO}(3)$ proven in Appendix \ref{Appendix:Expectation-Values}. In section \ref{section:comments-on-cluster-expansion} we make some comments on why the proof of Theorem \ref{thm:Main-theorem} must require more ideas beyond the cluster expansion. Lastly, section \ref{Section:large-N} has a few comments about extending the proof to gauge group $G=\mrm{SO}(2N+1)$ for $N>1$, some subtleties which arise including with regrd to the large $N$ limit.  \\

\textit{Proof Outline:} The proof of Theorem \ref{thm:Main-theorem} is divided into three steps. The first step is similar to Seiler's argument \cite{seiler1982gauge}, but with more details regarding the tube cluster computation. In this step we show that the first nonzero terms in cluster expansion for the Wilson loop observable correspond to `tube' clusters which we can calculate explicitly. Step 2 combines step 1, analyticity and bounds obtained via cluster expansion with a general complex analysis lemma we prove in order to argue that a single rectangular loop is very likely to have a perimeter law lower bound. Finally in step 3 we use concavity properties of the quark anti-quark potential obtained via reflection positivity in order to go from the one loop bound, to the uniform boundedness of the potential.\\

\noindent \textbf{Acknowledgements:} The author would like to thank Sky Cao, Christophe Garban, Slava Rychkov, Scott Sheffield, and Oren Yakir for very helpful discussions and comments. The author would also like to thank Roland Bauerschmidt, Christophe Garban, and Tyler Helmuth for pointing the author to the relevant section of Seiler's monograph. R.N. was supported by the NSF under Grant No. GRFP-2141064.

\section{Definitions and notation}\label{section:Preliminaries}

In this section we recall some definitions and notation relating to the lattice Yang--Mills model. Throughout this paper we set $G= \mrm{SO}(3)$ unless otherwise specified, though some results of Subsection \ref{subsection:Ref-Pos} hold for any gauge group. We will set $\Lambda_L :=[-L,L]^d  \cap \Z^d$ and $\Lambda_{\infty}:=\Z^d$. When a definition or property holds for any $\Lambda_L$ or $\Lambda_{\infty}$ we simply use $\Lambda$ to denote the lattice. While it will be convenient to preform calculations and estimates on $\Lambda_L$, all estimates will be uniform in $L$ so all of our results will immediately pass to the infinite volume model on $\Lambda_{\infty}$.  Let $E_{\Lambda}^+$ (resp. $E_\Lambda$) be the collection of positively oriented (resp. oriented) edges in $\Lambda$. Similarly, let $\mc{P}_{\Lambda}^+$ (resp. $\mc{P}_{\Lambda}$) denote the set of positively oriented (resp. oriented) plaquettes in $\Lambda$. For oriented edges $e \in E_\Lambda$, we denote by $e^{-1}$ the oppositely oriented version of $e$. Similarly, for oriented plaquettes $p \in \mc{P}_\Lambda$, we denote by $p^{-1}$ the oppositely oriented version of $p$.

\begin{definition}[Lattice gauge configuration]\label{def:gauge-configuration}
A lattice gauge configuration $Q$ is a function $Q : E_\Lambda^+ \ra G$. We always implicitly extend $Q : E_\Lambda \ra G$ to oriented edges, by imposing that $Q_{e^{-1}} := Q_e^{-1}$ for all $e \in E_\Lambda^+$.
\end{definition}

\begin{remark}
    We can, and usually will, equivalently identify the set of gauge configurations by $G^{E_{\Lambda}^+}$.
\end{remark}

\begin{definition}[Plaquette variable]
Given a lattice gauge configuration $Q : E_\Lambda^+ \ra G$ and an oriented plaquette $p = e_1 e_2 e_3 e_4$, we define the plaquette variable (abusing notation)
\begin{equs}
Q_p := Q_{e_1} Q_{e_2} Q_{e_3} Q_{e_4}.
\end{equs}
\end{definition}

Next, we define the lattice Yang--Mills measure.

\begin{definition}[Lattice Yang--Mills]\label{def:lattice-ym}
The \textit{lattice Yang-Mills measure} with Wilson action, and gauge group $G$ is the measure on $G^{E_{\Lambda}^+}$ given by
\begin{equs}
    d\mu_{G,\Lambda, \beta}(Q) := \frac{1}{Z_{\Lambda, \beta}} \exp(S_{\Lambda,\beta}(Q)) dQ,
\end{equs}
where the action is given by
\begin{equs}
    S_{G,\Lambda,\beta}(Q):=\sum_{p \in \mc{P}_{\Lambda}^+}\beta \mrm{Re}\mathrm{Tr}(Q_p)
\end{equs}
and $dQ = \prod_{e \in E_\Lambda^+} dQ_e$, and each $dQ_e$ is an independent copy of Haar measure on $G$ for each edge $e \in E_{\Lambda}^+$. Finally, the partition function,
\begin{equs}
Z_{\Lambda, \beta} :=\int_{G^{E_{\Lambda}^+}}\prod_{p \in \mc{P}_{\Lambda}^+} \exp(\beta \mathrm{Tr}(Q_p)) dQ.
\end{equs}
Since we always set $G=\mathrm{SO}(3)$, we will often use the notation $\langle \cdot \rangle_{\mu_{\Lambda, \beta}}$ or $\langle \cdot \rangle_{\Lambda, \beta}$ or even just $\langle \cdot \rangle$ to denote the expectation with respect to the probability measure $\mu_{G,\Lambda, \beta}$.
\end{definition}
\begin{remark}
    We will always work with sufficiently small $\beta$ so that the measure $\mu_{\Lambda_L,\beta}$ converges weakly as $L \to \infty$ to an infinite volume limit which we will denote by $\mu_{\Lambda_{\infty},\beta}$. This is proven both in \cite{osterwalderseiler1978} using a cluster expansion based approach, and in \cite{shen2023stochastic} using a Langevin dynamics based approach.
\end{remark}

Next, we define a collection of quantities associated to the lattice Yang--Mills measure. The first is the so called Wilson loop observable, an observable associated to any loop in the lattice.

\begin{definition}[Loops, Loop Variables, and Wilson Loop Observables]\label{def:loops-loop variables}
We will represent loops $\ell$ in $\Lambda$ by the sequence of oriented edges $\ell = e_1 \cdots e_n$ that are traversed by $\ell$. We denote $|\ell| := n$. Let $Q : E_\Lambda^+ \ra G$ be a lattice gauge configuration. For  a loop $\ell=e_1e_2\dots e_n$, let $Q_{\ell}:=Q_{e_1}Q_{e_2}\dots Q_{e_n}$ denote the corresponding loop variable. We define
\begin{equs}\label{eq:Wilson-loop-observable}
W_\ell(Q) := \tr(Q_\ell),
\end{equs}
where $\tr = \frac{1}{N} \Tr$ is the normalized trace.
We refer to $W_\ell$ as a Wilson loop observable.
\end{definition}

Whenever we talk about measurability in $G^{E_{\Lambda}^+}$, we refer to the usual Borel $\sigma$-algebra. We next define a general class of observables which includes Wilson loop observables.
\begin{definition}[Local Observables]
    A local observable $f$ is a function $f \in L^{\infty}(G^{E_{\Lambda}^+})$ for which there is a finite collection of edges $e_1,\dots,e_n$ for which $f$ is $\sigma (Q_{e_1,\dots,Q_{e_n}})$ measurable. We will refer to the minimal collection of edges with this property as the edges which $f$ depends on.
\end{definition}

Finally we review a certain class of symmetries known as gauge transformations which leave the Yang--Mills measure invariant.

\begin{definition}[Gauge Transformations and Invariance]
    A \textit{gauge transformation} $\mc{G}$ is a function from
    \begin{equs}
        \mc{G}:G^{E_{\Lambda}^+} \to G^{E_{\Lambda}^+}
    \end{equs}
    specified by assigning a group element $\phi_v \in G$ to every vertex $v \in \Lambda$ and sending
    \begin{equs}
        \mc{G}:(Q_e)_{e \in E_{\Lambda}^+} \to (\phi_x Q_e \phi_y^{-1})_{e=(x,y) \in E_{\Lambda}^+}.
    \end{equs}
    We say that a local observable $f$ is \textit{gauge invariant}, if $f = f \circ \mc{G}$ for any gauge transformation $\mc{G}$.
\end{definition}

Let us recall that Wilson loop observables are gauge invariant.



\section{Proof of Theorem \ref{thm:Main-theorem}}\label{Section:Proof-of-main-thm}

\subsection{Step 1: Analysis of the cluster expansion}\label{Section:Step-1}

The point of this section is to study the first nonzero terms which contribute to the cluster expansion of $\langle W_{\ell_{R,T}}\rangle_{\beta,\Lambda}$.

\begin{prop}[Leading order tube cluster]\label{lm:Tube-Lemma}
    Fix an oriented rectangular loop $\ell$ with both dimensions at least $100$ strictly contained in the interior of the finite lattice $\Lambda_L$. Then for any plaquette count function $K: \mc{P}_{\Lambda_L}^+ \to \N$, we have
    \begin{equs}\label{eq:zero-clusters}
        \int W_{\ell}(Q) \prod_{p \in \mc{P}_{\Lambda_L}^+} \mrm{Tr}(Q_p)^{K(p)}dQ = 0
    \end{equs}
    whenever $|K|:=\sum_p K(p)<4|\ell|-16$, and 
    \begin{equs}\label{eq:first-nonzero-clusters}
        \sum_{K:|K|=4|\ell|-16}\int W_{\ell}(Q) \prod_{p \in \mc{P}_{\Lambda_L}^+} \mrm{Tr}(Q_p)^{K(p)}dQ >\bigg(\frac{1}{9}\bigg)^{|\ell|}.
    \end{equs}
\end{prop}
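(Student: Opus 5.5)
The plan is to reduce both statements to a per-edge representation-theoretic constraint and then to a combinatorial count of plaquettes. Expanding $\mrm{Tr}(Q_p)$ and $W_\ell$ as products of matrix entries, the integral in \eqref{eq:zero-clusters} factorizes over edges. For each edge $e$, it becomes a Haar integral of a product of $m_e$ matrix entries of $Q_e$ (or $Q_e^{-1}=Q_e^T$). Here $m_e:=\mathbf 1_{e\in\ell}+\sum_{p\ni e}K(p)$ counts occurrences of $e$ in $\ell$ and in the plaquettes weighted by $K$.

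For $\mrm{SO}(3)$ the fundamental representation is the spin-$1$ (adjoint) representation. The tensor power $V^{\otimes m}$ contains the trivial representation if and only if $m\neq 1$. Hence, using the Haar-integral lemmas of Appendix \ref{Appendix:Expectation-Values}, the integral vanishes unless $m_e\neq 1$ for every edge $e$. In particular, every edge of $\ell$ must lie in some plaquette in the support of $K$, and every edge touched by the support of $K$ must be covered at least twice in total.

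\textbf{Step 1: the lower bound $|K|\ge 4|\ell|-16$.} Fix an edge $e\in\ell$ pointing in direction $\mu$, and consider the hyperplane perpendicular to $\mu$ through the midpoint of $e$. Every plaquette with $K(p)>0$ lying in a plane $(\mu,\nu)$ and crossing this hyperplane meets it in a unit segment joining the two $\mu$-edges of $p$. Viewing $\mu$-edges through the slice as vertices, this gives a multigraph on $\Z^{d-1}$. The constraint $m_e\neq1$ says the following:
\begin{itemize}
\item the root vertex (the point where $e$ crosses) has plaquette-degree at least $1$;
\item every other vertex has degree $0$ or at least $2$.
\end{itemize}
A finite graph with these properties must contain a cycle. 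Either the root edge is doubled, which is a $2$-cycle of multiplicity, or there is a genuine lattice cycle, which has length at least $4$. Ruling out the doubled-edge case needs a short separate argument: a doubled plaquette forces the other edges of $p$ to have multiplicity $2$, and one propagates from there, so I expect the minimal count in each slice to still be $4$.

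So each of the $|\ell|$ slices carries at least $4$ plaquettes. A plaquette in the $(\mu,\nu)$ plane meets only one $\mu$-slice and one $\nu$-slice. Two slices of different directions coming from edges of the same rectangle can share plaquettes only near the four corners. Bounding these shared plaquettes carefully should give a loss of at most $4$ per corner, hence $|K|\ge 4|\ell|-16$.

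The corner analysis is the step I expect to be the main obstacle. One must show that each corner loses at most $4$ and that slices do not otherwise overlap; the hypothesis that both sides have length at least $100$ keeps different corners from interacting. I would first try to make this rigorous by a local case analysis in the $3$-dimensional box around each corner.

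\textbf{Step 2: the equality case.} The same argument characterizes the minimizers. When $|K|=4|\ell|-16$, every slice carries exactly one $4$-cycle of multiplicity one through the loop point. Consistency between neighbouring slices forces these cycles to glue into a unit-cross-section ``tube'' having $\ell$ as one of its four longitudinal lines, with the shape at the corners forced. The tube is determined by the choice of a transverse direction and orientation on each side, subject to matching at corners. In particular, these choices include the tubes lying on one side of the rectangle's plane, which exist since $d\ge3$.

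\textbf{Step 3: evaluating and bounding the tube integrals.} For each tube I would integrate out the edge variables successively, using the appendix identity $\int \mrm{Tr}(AU)\mrm{Tr}(BU)\,dU=\tfrac13\mrm{Tr}(AB^T)$ together with its version for Wilson-loop edges. Along each tube the integrals telescope. Every interior edge is shared by exactly two factors, so each integration contributes a factor $\tfrac13$, and the final trace is a positive constant. This makes each minimal term equal to $3^{-c}$ for some $c\le 2|\ell|$, up to the corner bookkeeping, and in particular positive.

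Since every term in the sum \eqref{eq:first-nonzero-clusters} is then nonnegative, the sum is at least the contribution of a single tube, which is at least $(1/3)^{2|\ell|}=(1/9)^{|\ell|}$. To obtain the strict inequality, I would either use the existence of at least two such tubes or a slightly better exponent $c<2|\ell|$ coming from the corners.
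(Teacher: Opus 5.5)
\textbf{Step 1 is not merely incomplete; as formulated it is false.} You try to get $|K|\ge 4|\ell|-16$ from the per-edge condition $m_e\neq 1$ alone, but that condition admits much smaller $K$.

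\emph{First counterexample.} Put $K(p)=2$ on each of the $|\ell|$ plaquettes of the unit-height wall $\ell\times[0,1]$ in a transverse direction. Then $|K|=2|\ell|$: edges of $\ell$ have $m_e=3$, top edges $m_e=2$, vertical edges $m_e=4$. Nothing ``propagates'' from a doubled plaquette, so your expected minimum of $4$ per slice fails.

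\emph{Second counterexample (the paper's pinched corners).} For $\ell=\partial([0,R]\times[0,T])$, take the boundaries of the boxes $[1,R-1]\times[0,1]\times[0,1]$, $[1,R-1]\times[T-1,T]\times[0,1]$, $[0,1]\times[1,T-1]\times[0,1]$ and $[R-1,R]\times[1,T-1]\times[0,1]$, together with the four unit corner squares of the rectangle. Every edge is covered $0,2,3$ or $4$ times, the paper's slab property holds, and $|K|=4|\ell|-20$.

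Both integrals do vanish, but for non-local reasons that no multiplicity count can see. For the wall, integrating the top edge of a doubled plaquette gives $\tfrac13\mathrm{Tr}(AA^T)=1$, after which each loop edge occurs once. This is the paper's slab lemma: gauge-fix the bounding hyperplanes, then apply Lemma~\ref{lm:Haar-zero-integral}. For the pinched configuration, integrating the two loop edges at each corner absorbs the corner square and reroutes $\ell$ onto the four box surfaces. Each surface is a sphere on which $\ell$ runs along an arc, so integrating its doubly covered edges leaves a gauge-invariant function of the edges of a tree, i.e.\ a constant. Again every loop edge then appears once; this is the paper's pinched-corner lemma. These two vanishing mechanisms are the missing ideas. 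The paper counts plaquettes only after them, restricted to balanced $K$ that have the slab property and no pinched corner.

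\textbf{Further problems in the slice picture and in Step 3.} The hyperplane through the midpoint of $e$ also bisects the opposite edge $e'$. So two vertices may have degree one, and a path from $e$ to $e'$ is admissible. That is why the paper's slab property reads ``cycle or connected to $e'$'', and why it needs side length $\ge 100$. The minimal tube actually uses this: the slab $x_1\in[0,1]$ carries a single $(2T+2)$-cycle through both loop edges, not a $4$-cycle, so your equality-case description is wrong. Slabs of different directions also share plaquettes along entire sides, not only near corners: the tube's faces along a side lie in the end slab of the other direction.

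In Step 3, $c\le 2|\ell|$ is asserted rather than derived, and your own telescoping contradicts it. Cutting the torus along $\ell$ leaves an annulus. Each of the $E-|\ell|=7|\ell|-32$ doubly covered edges gives $\tfrac13$, and each of the $V-|\ell|=3|\ell|-16$ off-loop vertices returns a factor $3$. Since $V-E=-F$ on a torus, one tube contributes $3^{-(4|\ell|-16)}\cdot\tfrac13\int \mathrm{Tr}(U)^3\,dU=3^{-(4|\ell|-15)}$. For fixed $d$, this is far below $(1/9)^{|\ell|}$ even after summing over the $2(d-2)$ tubes.

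The paper's index count, which gives $(1/3)^{2|\ell|-9}$, uses $3|\ell|-8$ and $5|\ell|-16$ for these two numbers and is inconsistent with Euler's formula. So the constant in the statement should be about $81^{-|\ell|}$. This is harmless downstream: Lemma~\ref{lm:Input-for-complex-analysis-lemma} only needs a lower bound of the form $c_0^{|\ell|}$.
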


We split the proof of Proposition \ref{lm:Tube-Lemma} into several lemmas.

 In the first lemma, we argue that all $K$ corresponding to nonzero integrals must obey what we call the slab property. Consider any edge in the loop $\ell$ which does not contain any of the four corner vertices, $e=(x,y)$, and let $\mc{S}_e$ be the slab between the two $(d-1)$-dimensional hyperplanes passing through $x$ and $y$ which are perpendicular to $e$. There is precisely one other edge in the loop contained in the slab $\mc{S}_e$ which we denote by $e'$. Next let $\mc{P}_e$ denote the collection of plaquettes in the support $\mrm{supp}(K)$ which lie strictly inside this slab (not including plaquettes lying exactly on the two hyperplanes). We can form a graph $\mc{G}_e$ with vertices corresponding to these plaquettes by connecting the vertices with an edge if the plaquettes their corresponding plaquettes share a common edge. 
 \begin{definition}
     We say that $K$ satisfies the \textit{slab property} with respect to $\ell$ if for every edge $e$ on $\ell$, the connected component of $\mc{G}_e$ connected to $e$ either contains a cycle, or is connected to the opposite edge $e'$ (see the preceding paragraph for the definition of $\mc{G}_e$).
 \end{definition}

An example of a cluster which does not satisfy the slab property is depicted in Figure \ref{fig:Slab-pic}.

\begin{lemma}[Nonzero clusters obey the slab property]
    If $K$ does not satisfy the slab property defined in the preceding paragraph, then 
    \begin{equs}
        \int W_{\ell}(Q) \prod_{p \in \mc{P}_{\Lambda_L}^+} \mrm{Tr}(Q_p)^{K(p)} dQ = 0.
    \end{equs}
\end{lemma}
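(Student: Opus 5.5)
Since the slab property fails for some edge $e=(x,y)$ of $\ell$, I will show the integral vanishes by exhibiting a change of variables that preserves Haar measure and every plaquette trace $\mrm{Tr}(Q_p)$ with $p\in\mrm{supp}(K)$, but turns $W_\ell$ into a multiple $\chi(g)\,W_\ell$ whose average over $g$ is $0$. For $\mrm{SO}(3)$ we cannot use a center element, so the twist will be by a $g$ that conjugates or multiplies only along a "cut" through the slab. Let $C$ be the connected component of $\mc{G}_e$ containing $e$ (the plaquettes strictly inside $\mc{S}_e$ containing $e$, together with everything reachable from them). By assumption $C$ is a tree (no cycle) and does not reach $e'$.

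The steps, in order, are as follows.

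First, reduce to a single-edge statement. Perform a gauge fixing inside the slab. Every edge strictly inside $\mc{S}_e$ is parallel to $e$; edges lying on the two bounding hyperplanes are perpendicular. The plaquettes in $\mc{P}_e$ each contain exactly two slab-crossing edges and two hyperplane edges. So $\mc{G}_e$ is naturally a graph whose plaquettes join slab-crossing edges, and $C$ can be read as a tree $T$ on the set of crossing edges it touches.

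Second, use the tree structure to integrate out. Apply a gauge transformation, or equivalently a measure-preserving substitution, along the tree $T$ rooted at $e$. Process the leaves first. For a leaf crossing edge $f\neq e$ joined to its parent by a single plaquette $p$ of $C$ (no cycle means it is joined by only one plaquette of $C$), the variable $Q_f$ appears only in $\mrm{Tr}(Q_p)^{K(p)}$. Here I use that $f$ is not on $\ell$, since $C$ misses $e'$, and that plaquettes outside the open slab do not contain crossing edges. Substituting $Q_f\mapsto$ (Haar-distributed variable times the fixed other factors) makes $Q_p$ Haar distributed independently of everything else. Hence integrating $Q_f$ yields $\int \mrm{Tr}(U)^{K(p)}dU$, a constant, and $p$ disappears. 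Iterating removes all of $C$ except the plaquettes touching $e$. For those, $Q_e$ is the only crossing edge in $C$ shared with anything else, so after the reduction, within the slab $Q_e$ appears only in $W_\ell$. The reason is that every plaquette of $C$ containing $e$ has its other crossing edge a (now removed) subtree. Concretely, I will instead do the change of variables in the other order: replace $Q_e\mapsto Q_e h$ for Haar $h$, and compensate on each plaquette $p\ni e$ in $C$ by changing the partner crossing edge, propagating down the tree. Acyclicity guarantees the propagation is consistent, and not reaching $e'$ guarantees $Q_\ell$ is affected only through $Q_e$.

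Third, conclude. After this invariance, the integral equals itself with $W_\ell(Q)$ replaced by $\int \tr(A h B)\,dh$ for fixed $A,B$. This is $\tr(AB')$ with $\int h\,dh=0$, because the fundamental representation of $\mrm{SO}(3)$ is nontrivial and irreducible. So the integral is $0$.

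The main obstacle is the second step: making the propagation along $C$ precise. The difficulty is checking that the compensating substitution on each descendant edge preserves all other plaquettes in $\mrm{supp}(K)$, including those on the boundary hyperplanes. These are unaffected since they contain no crossing edge. Acyclicity is exactly what prevents a crossing edge from receiving two conflicting compensations. I would formalize this by defining $h_f$ for each crossing edge $f$ in $T$ inductively from the root, as the appropriate conjugate of $h$ dictated by the plaquette joining $f$ to its parent, and verifying invariance plaquette by plaquette.
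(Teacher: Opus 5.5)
Your proof is correct, but it is organized differently from the paper's.

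\textbf{The paper's route.} The paper gauge-fixes to the identity the hyperplane edges of the plaquettes in the component of $e$. This is legitimate because acyclicity of $C$ forces its projection onto each bounding hyperplane to be a path, hence a forest. After this, every plaquette of $C$ becomes $\mathrm{Tr}(Q_aQ_b^{T})$ with $a,b$ crossing edges. The paper then invokes Lemma~\ref{lm:Haar-zero-integral}: right-multiplying all these crossing-edge variables by one diagonal sign matrix $D\in\mathrm{SO}(3)$ preserves each $\mathrm{Tr}(Q_aQ_b^T)$ and flips the sign of the relevant entry of $Q_e$.

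\textbf{How your two variants compare.} Your propagated substitution $Q_f\mapsto Q_fh_f$, with $h_f$ the conjugate of the parent's element by the connecting hyperplane edge, is exactly this symmetry written in an arbitrary gauge. In the gauge where those hyperplane edges are $I$, every $h_f=h$. The only difference is that Haar averaging and $\int h\,dh=0$ replace the single sign flip. Your leaf-stripping variant is genuinely more elementary. Fubini plus invariance of Haar measure makes each leaf plaquette integrate to the constant $\int\mathrm{Tr}(U)^{K(p)}\,dU$, and at the end $\mathbb{E}[Q_e]=0$.

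\textbf{What each buys.} Your route needs no gauge fixing, so you never have to check which hyperplane edges can be fixed simultaneously. (The paper's wording fixes those of all of $\mathcal{P}_e$, which need not form a forest if another component of $\mathcal{G}_e$ is a ring; only the component of $e$ should be fixed.) It also works verbatim for any compact group with the Wilson loop in a nontrivial irreducible representation. The paper's route is shorter given the appendix lemma. Both arguments really only use that the crossing-edge graph $T$ is a tree, which is weaker than acyclicity of $C$ in $\mathcal{G}_e$.

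\textbf{Things to tidy.}
\begin{itemize}
\item Your opening says $W_\ell$ becomes a scalar multiple $\chi(g)W_\ell$. It does not; it becomes $\mathrm{tr}(AhB)$, as your third step correctly states.
\item Leaf-stripping removes \emph{every} plaquette of $C$, including those containing $e$, not ``all except those touching $e$''. Once a child of $e$ is a leaf, its plaquette with $e$ integrates to a constant.
\item The gauge fixing announced in your first step is never used.
\item For the propagated substitution, say explicitly that it preserves product Haar measure. Each $h_f$ depends only on $h$ and hyperplane variables, which the map leaves fixed, so conditionally it is a product of right translations.
\end{itemize}
Either variant alone proves the lemma.
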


\begin{proof}
    \begin{figure}  \centering
    \includegraphics[width=10cm,height=10cm]{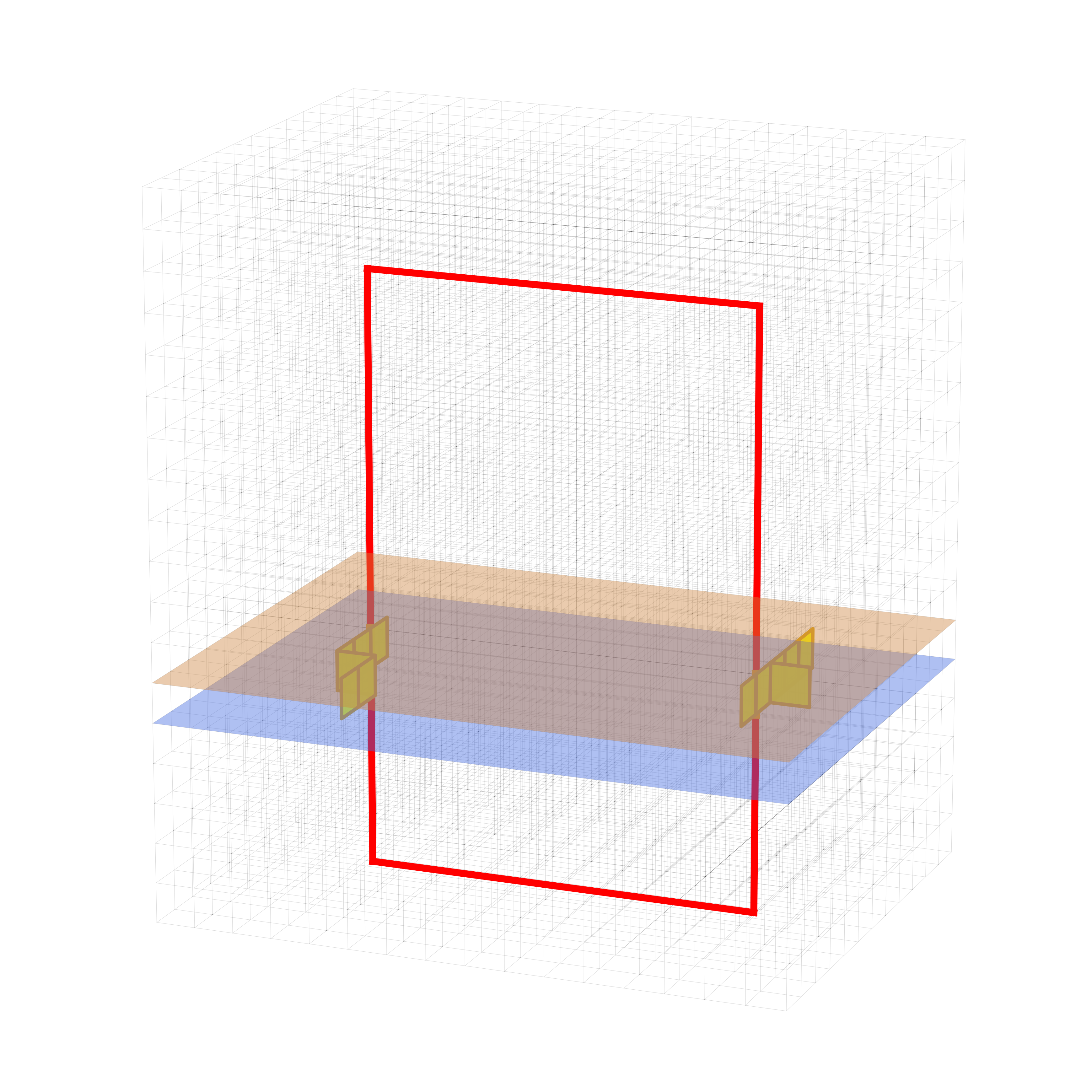}
    \caption{An example of a rectangular loop $\ell$ shown in red, and a height one slab bounded by two hyperplanes orthogonal to an edge in the loop, together with a collection of plaquettes shown in yellow lying in this slab which don't form any loops or a path between the two opposite edges in the loop.}
    \label{fig:Slab-pic}
    \end{figure}

    If there is an edge $e$ where the corresponding $\mc{G}_e$ has no cycles and does not connect to $e'$, then we can gauge fix all the edge variables on the two hyperplanes bounding the slab $\mc{S}_e$ which belong to a plaquette in $\mc{P}_e$ to be the identity matrix. Then preforming just the integral over the edge variables for edges belonging to plaquettes in the connected component of $\mc{G}_e$ attached to $e$, we must get $0$, since the resulting integral is exactly of the form covered by Lemma \eqref{lm:Haar-zero-integral}.

\end{proof}

Now if $K$ satisfies the slab property, then $|K|$ must be at least $4(|\ell|-8)$ since for every edge $e$ on $\ell$ not connected to a corner, $\mc{G}_e$ either has a cycle of $4$ or more plaquettes or connects $e$ to its opposite edge $e'$ which require $100$ plaquettes. Moreover by the first moment computation of Lemma \ref{lm:Haar-SO(3)-Moments}, the plaquette count $K$ must be balanced as defined below for the corresponding integral to be nonzero.  
\begin{definition}
    We call a plaquette count $K$ \textit{balanced} with respect to $\ell$ if every edge in $\Lambda$ is included in $K(p) \cup \ell$ either $0$ or at least $2$ times.
\end{definition}
A little casework shows that we must add at least $3$ plaquettes per corner in addition to the $4(|\ell|-8)$ guaranteed by the slab property to ensure $K$ is both balanced and has the slab property. We need one further property to rule out $K$ with $|K|\in[4|\ell|-20,4|\ell|-16]$ besides the tube clusters since the cluster could have `pinched' corner at a corner of the loop as depicted in Figure \ref{fig:pinched-corner} using $3$ plaquettes at the corner instead of the $4$ that the tube cluster uses. We defined the notion of a pinched corner more precisely below.
\begin{definition}
    A plaquette count $K$ has a \textit{pinched corner} with respect to $\ell$ at a corner vertex $v$, if for any vertex $x$ on one side of $\ell$ adjacent to $v$ and another vertex $y$ on the other side of $\ell$ adjacent to $v$, and any continuous paths $l:[0,1]\to \mrm{supp}(K)$ with $l(0)=x$, $l(1)=y$,  which is homotopic (on $\mrm{supp}(K)$) to a path passing through $v$ must intersect an edge which belongs to at least $3$ plaquettes in $\mrm{supp}(K)$ (this intersection may occur at the endpoint of an edge). In the definition we allow either $x$ or $y$ to be $v$, but not both.
\end{definition}

\begin{lemma}[Pinched corner property]
    Any $K$ with $|K|\leq 4|\ell|-16$ which has both the balanced and slab properties must either have a pinched corner, or is exactly one of the $2(d-2)$ tubes depicted in Figure \ref{fig:tube-diagram}. See the preceding paragraph for the definitions of the pinched corner property.
\end{lemma}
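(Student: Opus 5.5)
We argue by a plaquette-counting and rigidity argument: we show that a balanced $K$ with the slab property and $|K|\le 4|\ell|-16$ has essentially no budget beyond what the slab property forces, and that this forces the tube shape unless a corner is pinched.

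\emph{Step 1: the sides.} For each non-corner edge $e$ of $\ell$, the slab property gives at least $4$ plaquettes in $\mc{P}_e$, lying strictly inside $\mc{S}_e$. These come either from a cycle in $\mc{G}_e$ through the component of $e$, or from a path to $e'$, which costs at least $100$. Slabs attached to distinct parallel edges of the same side are disjoint, since their interiors are disjoint. Slabs for edges on perpendicular sides can only share plaquettes near the corners, because a plaquette lies strictly inside a slab only if it contains an edge parallel to $e$ crossing it. So we have the accounting $|K|\ge 4(|\ell|-8)+(\text{corner contributions})$.

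Since $|K|\le 4|\ell|-16=4(|\ell|-8)+16$, the total budget is $16$ extra plaquettes across the four corner regions. Paths to $e'$ cost at least $100$, so for every non-corner edge the component of $e$ must contain a $4$-cycle and no more. A cycle of exactly $4$ plaquettes in a unit-height slab containing $e$, with each plaquette sharing edges with its neighbours, must be the four plaquettes surrounding a single line segment, i.e.\ a unit "tube cross-section" around a lattice edge parallel to $e$. Balancedness at $e$ forces $e$ itself to lie on this cross-section. Hence along each side, the cross-sections are unit squares in a plane transverse to $e$ containing $e$. There are $2(d-2)$ choices of transverse direction relative to the plane of $\ell$: $d-2$ normal directions, each with two signs.

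\emph{Step 2: consistency along sides.} Balancedness on the edges lying on the slab hyperplanes forces adjacent cross-sections to be glued consistently. An edge shared between slabs must appear at least twice, and any leftover single appearance would need extra plaquettes, which would cost budget at the corners only. I would argue that changing the transverse direction between adjacent edges of a side creates edges covered exactly once that cannot be repaired within one unit of budget. With the global budget $16$ spread over at most $8$ side-transitions near corners, plus the corner needs, this forces a single direction along each side. Consecutive sides must also share the same normal direction near the corner, or else the corner cost exceeds $4$.

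\emph{Step 3: corners.} This is the main obstacle. The earlier casework shows each corner needs at least $3$ extra plaquettes beyond the sides, so the total excess is in $[12,16]$. At a corner where the two incoming half-tubes have been fixed, we enumerate the small configurations of at most $4$ plaquettes that make all edges balanced. The $4$-plaquette configuration completing the tube elbow gives the tube of Figure \ref{fig:tube-diagram}. Any configuration with $3$ plaquettes, or any $4$-plaquette alternative, yields an edge at the corner vertex $v$ lying in at least $3$ plaquettes of $\mrm{supp}(K)$, which separates the two sides. We check this separates every path from $x$ to $y$ homotopic through $v$, which is exactly the pinched corner condition.

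Thus either some corner is pinched, or every corner is the standard elbow and $K$ equals one of the $2(d-2)$ tubes. The delicate points are the finite case enumeration at corners and ruling out direction changes along a side.
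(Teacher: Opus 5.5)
Your proposal follows essentially the paper's own argument. Each non-corner slab needs at least $4$ plaquettes. Each corner needs at least $3$ more, with equality only in the pinched configuration. So without a pinch the budget of $16$ is exhausted by exactly $4$ per corner, which leaves no room for direction changes along the sides (your Step 2, which the paper folds into its final count of the $16$ plaquettes outside the slab interiors) and forces the tube.

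Two details need correcting, though neither affects the strategy. First, slabs of perpendicular sides do meet away from the corners: they share the plaquettes parallel to the plane of $\ell$ lying over the unit square where the slabs cross. What actually prevents double counting is the extra cost for the component attached to $e$ to reach such a plaquette. Second, in the minimal pinched corner (the plaquette containing both corner edges, plus the two plaquettes capping the neighbouring $4$-cycles), the triply covered edges are the two edges of that plaquette not containing $v$, not edges at $v$.
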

\begin{proof}
    Suppose $K$ has the slab and balanced property, but not the pinched corner property at any corner. Then fix a corner $v$ adjacent to edges $e$ and $e'$ on $\ell$. Moreover let $e''$ and $e'''$ be the edges on $\ell$ adjacent to $e$ and $e'$ an recall that $\mc{S}_{e''}$ and $\mc{S}_{e'''}$ are defined as the height $1$ slabs bounded by the two hyperplanes perpendicular to $e''$ and $e'''$ respectively. In order for $K$ to be balanced with respect to $\ell$, $\mrm{supp}(K)$ must contain plaquettes connected to $e$ and $e'$ (this could be just one plaquette). By the slab property, and since $|K| \leq 4|\ell|-16$ and the side lengths of $\ell$ are $\geq 100$, it must follow that $\mc{G}_{e''}$ and $\mc{G}_{e'''}$ have connected components with cycles connected to $e''$ and $e'''$ since including a path to their opposite edge would require too many plaquettes contradicting $|K| \leq 4|\ell|-16$. As a result to ensure that $K$ is balanced with respect to $\ell$, either $K$ includes two copies of every plaquette of $\mc{S}_{e''}$ or $\mc{S}_{e'''}$ which do not neighbor the plaquettes containing $e$ and $e'$, which would require too many plaquettes again breaking the $|K| \leq 4|\ell|-16$ condition, or $\mrm{supp}(K)$ contains at least one additional plaquette  intersecting the hyperplane between $e$ and $e''$, and a distinct plaquette intersecting the hyperplane between $e'$ and $e'''$. There is only one possible scenario which we can achieve the balanced property with exactly the minimal $3$ plaquettes guaranteed by the preceding sentences, in which case both $\mc{G}_{e''}$ and $\mc{G}_{e'''}$ are $4$-cycles and we include the two plaquettes on the hyperplanes which neighbor all $4$ plaquettes in $\mc{P}_{e''}$ and $\mc{P}_{e'''}$ as depicted in Figure \ref{fig:pinched-corner}. In this case it is trivial to see that the pinched corner property is satisfied yielding a contradiction. So we need at least one more plaquette which intersects the boundaries of $\mc{S}_{e''}$ or $\mc{S}_{e'''}$ to ensure that $K$ is balanced. Since this is true for every corner of $\ell$, the only case left to consider is when $|K|=4|\ell|-16$, and there are $16$ plaquettes in $\mrm{supp}(K)$ which don't belong in the interior of any of slabs considered in the slab property. The only way this is possible is if $K= \mathbbm{1}_{T}$ where $T$ is the collection of $4|\ell|-16$ plaquettes forming a closed tube as in figure \ref{fig:tube-diagram}.
\end{proof}

    \begin{figure}  \centering
    \includegraphics[width=8cm,height=8cm]{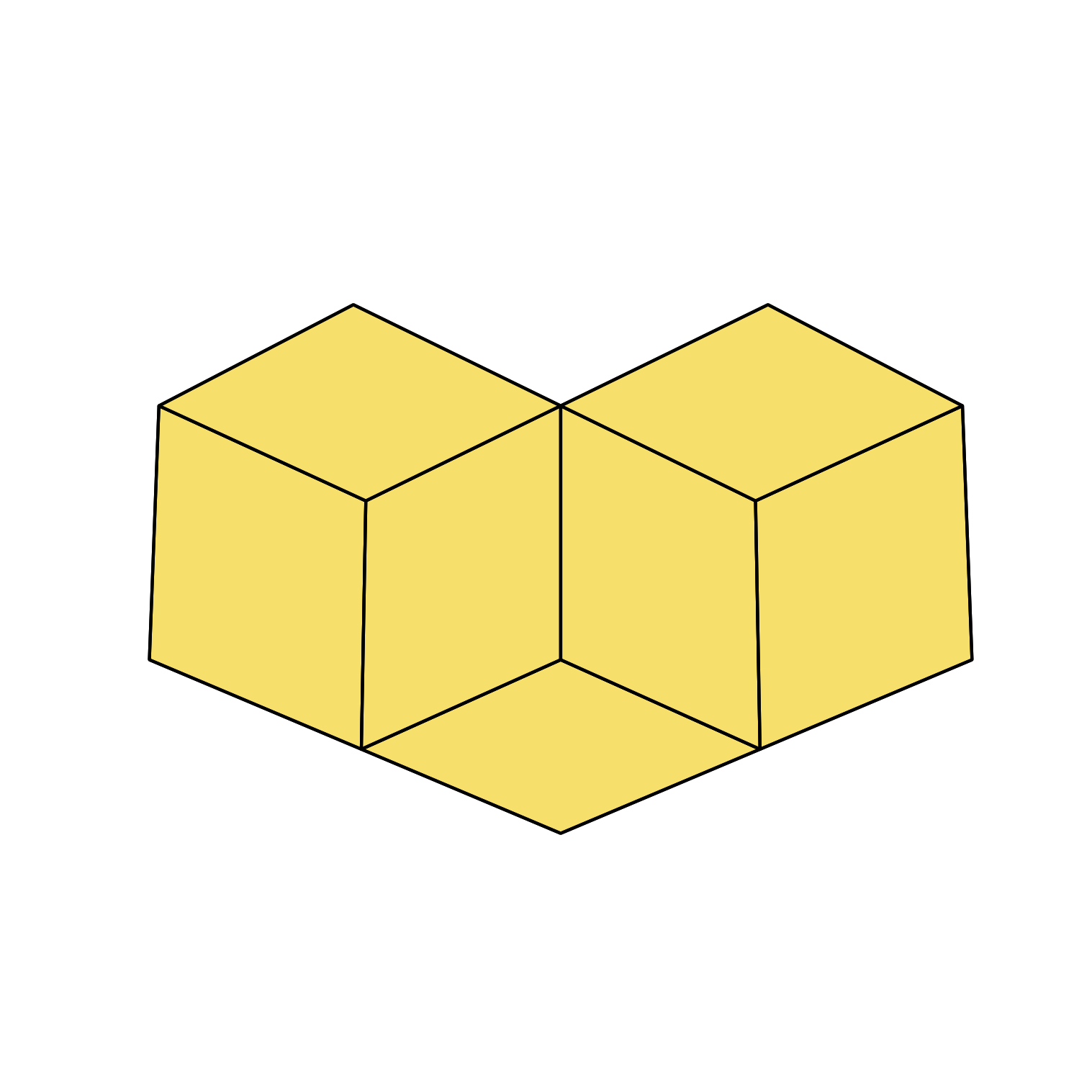}
    \caption{An example of a pinched corner which could appear in a cluster.}
    \label{fig:pinched-corner}
    \end{figure}
   
\begin{lemma}[Pinched corner clusters are zero]
    For all $K$ with $|K|\leq 4|\ell|-16$ and $K$ satisfies the slab property, is balanced, and has at least one pinched corner, then,
    \begin{equs}
        \int W_{\ell}(Q) \prod_{p \in \mc{P}_{\Lambda_L}^+} \mrm{Tr}(Q_p)^{K(p)} dQ = 0.
    \end{equs}
\end{lemma}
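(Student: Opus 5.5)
The plan is to reduce to the same mechanism as in the proof of the slab-property lemma: find a set of edge variables that can be integrated out separately, so that the integral factors through a Haar integral that vanishes by Lemma \ref{lm:Haar-zero-integral}, or by the first and second moment computations of Lemma \ref{lm:Haar-SO(3)-Moments}. The difference here is that the vanishing is forced at the pinched corner $v$ rather than in a slab.

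\textbf{Step 1: reduce to the minimal local picture at the pinched corner.} Fix a pinched corner $v$, with adjacent loop edges $e,e'$ and next edges $e'',e'''$ as in the preceding proof. The counting in that proof applies because $|K|\le 4|\ell|-16$, $K$ is balanced, and $K$ has the slab property. It shows that near $v$ the support of $K$ must be exactly the configuration of Figure \ref{fig:pinched-corner}, up to the few spare plaquettes allowed by the budget. That configuration consists of the $4$-cycles $\mathcal{G}_{e''},\mathcal{G}_{e'''}$, the plaquettes on the bounding hyperplanes, and the plaquettes through $e,e'$. Every other slab $\mathcal{S}_f$ for $f$ away from the corners contributes a $4$-cycle of multiplicity one.

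I will first do a short case analysis that uses up the remaining spare plaquettes. The aim is to show that in every case there is an edge $b$ near $v$ lying in at least $3$ plaquettes of $\mathrm{supp}(K)$, and that $b$ separates the part of the tube running along the side $e,e''$ from the part running along $e',e'''$. This is exactly what the pinched corner definition supplies: every path from one side to the other, homotopic to one through $v$, meets such an edge.

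\textbf{Step 2: gauge fix and integrate out one side.} Gauge fix to the identity the edge variables along a maximal tree in the support of $K$ on the $e$-side of the pinch, away from the pinch edges. The remaining variables on that side then appear only through products of the form $\mathrm{Tr}(A X B X^{-1}\cdots)$ and through the loop matrix $Q_\ell$, which passes through $v$.

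Because the pinch means the two sides meet only through the edges of triple incidence, I then integrate the free edge variables on the $e$-side tube. The tube without the corner is a cylinder of $4$-cycles. Integrating it out along the lines of the Haar computations in Appendix \ref{Appendix:Expectation-Values} should reduce it to a single adjoint-type insertion $\chi(g)$ for the holonomy $g$ around the cylinder. Here $\chi$ is the $\mathrm{SO}(3)$ character, and the fundamental representation coincides with the adjoint.

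What is left at the corner is then an integral over the pinch edge variable $X$ of a product of traces. In this product the fundamental loop contributes one copy of $X$, while the plaquettes through the pinch contribute $X$ an even or otherwise mismatched number of times through the $3$ incident plaquettes. I expect this integral to vanish by Schur orthogonality, $\int \rho(X)_{ij}\,dX=0$ for nontrivial irreducible $\rho$. The concrete check is that $\mathbf{3}^{\otimes 3}$ appears with the contraction pattern forced by the pinched geometry and contains no invariant compatible with it. This is the computation covered by Lemma \ref{lm:Haar-zero-integral}.

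\textbf{Main obstacle.} The hard step is Step 1's claim that the integral genuinely factors at the pinch. The edge with $3$ incident plaquettes is shared, so the two sides are not independent. One must show that after integrating out the cylinder on one side, the dependence left on the shared variables has the form to which Lemma \ref{lm:Haar-zero-integral} applies. Note that for $\mathrm{SO}(3)$, $\mathbf{3}\otimes\mathbf{3}\otimes\mathbf{3}$ does contain the trivial representation via $\varepsilon_{ijk}$, so three incident plaquettes alone do not force vanishing. The argument must therefore use the specific orientations and the fact that the loop is threaded through the pinch. I would first try to verify vanishing directly on the exact three-plaquette configuration of Figure \ref{fig:pinched-corner}, by writing the reduced corner integral explicitly and applying the moment formulas of Lemma \ref{lm:Haar-SO(3)-Moments}. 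I would then argue that the finitely many allowed extra plaquettes either leave this local integral unchanged or break the balanced or slab property.
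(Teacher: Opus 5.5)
Your proposal has a genuine gap: the vanishing is never actually proved, and the local mechanism you aim for cannot supply it.

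\textbf{Why a local argument at the pinch fails.} An edge met by three or four factors contributes $\mathbb{E}[Q^{\otimes 3}]$ or $\mathbb{E}[Q^{\otimes 4}]$. For $\mathrm{SO}(3)$ both are nonzero: the first is the $\varepsilon$-term of Lemma \ref{lm:Haar-SO(3)-Moments}, the second comes from pairings. This is exactly what makes the tube term nonzero. So integrating out a single pinch variable, with the others fixed, gives a nonzero function. Lemma \ref{lm:Haar-zero-integral} does not help either: it is a sign-flip argument for one unpaired entry against traces $\mathrm{Tr}(Q_aQ_b^T)$, which is not the structure at a pinch.

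Your local picture is also not the one at hand. Take the minimal pinched configuration: the plaquette $B$ containing $e,e'$, plus the two plaquettes capping the $4$-cycles in $\mathcal{S}_{e''}$ and $\mathcal{S}_{e'''}$. Its edges of incidence $\ge 3$ are the two edges of $B$ other than $e,e'$, and the edge $c$ shared by the two caps, which has incidence $4$. None of them lies on $\ell$.

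Step 1 is unsupported as well. The pinched-corner condition is topological. Each pinched corner frees one plaquette of budget, and that plaquette can be spent anywhere. The preceding lemma only singles out the minimal three-plaquette picture. Likewise, your reduction of a tube segment to one character of a holonomy ignores that $\ell$ runs along the segment through edges met three times.

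\textbf{The missing idea.} The cancellation is global, and it happens in the loop variables, not at the pinch. The paper integrates out every doubly covered edge with Lemma \ref{lm:double-edge-integration}, i.e.\ it glues plaquettes. This reduces the integral to $\int \mathrm{Tr}(Q_\ell)\prod_i \mathrm{Tr}(Q_{\ell_i})\,dQ$, where each $\ell_i$ is built from edges of $\ell$ and edges of incidence $\ge 3$. Since $\mathbb{E}[Q_{ij}]=0$, nonvanishing forces every edge of $\ell$ to lie on some $\ell_i$ that is nontrivial after cancelling backtracks. These loops cannot cross a pinched corner. So each of the $k$ pinched corners forces an extra loop using at least three off-$\ell$ edges of incidence $\ge 3$. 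That costs at least two plaquettes per pinched corner, against the one saved, so $|K|\ge (4|\ell|-16-k)+2k>4|\ell|-16$, a contradiction.

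The minimal configuration shows the mechanism concretely:
\begin{itemize}
\item Integrating $e$ against $B$ replaces $\ell$ by a loop $\tilde\ell$ through an endpoint of $c$; the edge $e'$ cancels as a backtrack.
\item The remaining plaquettes form a torus once $c$ is doubled, since $c$ is met both where the two adjacent tube sections meet and between the caps. The two copies of $c$ form a meridian meeting $\tilde\ell$ exactly once.
\item Contracting all other doubly covered edges therefore leaves, up to a positive constant, $\mathrm{Tr}(Q_{\tilde\ell})\,\mathrm{Tr}(L\,G\,L^{-1}G^{-1})$. Here $L$ is the holonomy of $\tilde\ell$ based at that endpoint, and $G=Q_cQ_c^{-1}=I$.
\end{itemize}
The second factor is $3$, so the pinch variable $Q_c$ drops out entirely. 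The zero comes from the first moment on the edges of $\tilde\ell$.
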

\begin{proof}
      We start by integrating over all the edges appearing exactly twice, except the edges on $\ell$ adjacent to the corners, by applying the integration rule, Lemma \ref{lm:double-edge-integration}. After preforming this integrations, the integral  over the remaining edges can be expressed as (up to a constant factor),
    \begin{equs}\label{eq:Left-over-loops}
        \int \mrm{Tr}Q_{\ell}\mrm{Tr}Q_{\ell_1}\dots \mrm{Tr}Q_{\ell_j} dQ
    \end{equs}
    where $\ell_1,\dots,\ell_k$ only contain edges which either lie on $\ell$ or belonged to the boundary of more than two plaquettes in $K$. For the integral \eqref{eq:Left-over-loops} to be nonzero, every edge on $\ell$ must belong to at least one $\ell_i$. Since the $\ell_i$ are constructed by gluing plaquettes in $\mrm{supp}(K)$, it would clearly contradict the pinched corner property to have a loop $\ell_i$ passes through a pinched corner. Thus if there are $k$ pinched corners, there are at least $k$ loops which do not cross a pinched corner, and in particular must include edges not on $\ell$. Each such loop must be nontrivial after removing backtracks and so due to the structure of $\Z^d$ each such $\ell_i$ which doesn't pass through a pinched corner either contains $\leq 3$ edge on $\ell$ and at least $3$ not on $\ell$, or it contains at least $6$ edges not on $\ell_i$. Since these additional edges must have intersected at least $3$ plaquettes in $K$, straightforward casework implies that we must have  $|K| \geq (4|\ell|-16-k)+2k$ if $K$ has $k$ pinched corners, implying the desired conclusion.
\end{proof}

\begin{lemma}
    For any of the $K$ corresponding to the $2(d-2)$ minimal tubes containing $\ell$,
    \begin{equs}
         \int W_{\ell}(Q) \prod_{p \in \mc{P}_{\Lambda_L}^+} \mrm{Tr}(Q_p)^{K(p)} dQ=\bigg( \frac{1}{3}\bigg)^{2|\ell|-9}.
    \end{equs}
\end{lemma}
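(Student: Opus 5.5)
The plan is to evaluate the tube integral by integrating out edges one at a time, using the integration rules of Appendix \ref{Appendix:Expectation-Values}, and to keep track only of how many factors of $\frac13$ accumulate. First fix one of the $2(d-2)$ tubes $T$ and describe its combinatorics. $T$ is a closed surface of plaquettes with the topology of a torus, with $K=\mathbbm{1}_T$. The loop $\ell$ runs along one of the four longitudinal ``corner lines'' of the square cross-section. Consequently:
\begin{itemize}
\item every edge of $T$ not on $\ell$ belongs to exactly two plaquettes of $T$;
\item every edge on $\ell$ belongs to exactly two plaquettes of $T$ and, in addition, to $\ell$ itself, so it appears three times in the integrand.
\end{itemize}
I would record the counts of vertices $V$, edges $E$ and faces $F=4|\ell|-16$ of this cellulation of the torus, using $V-E+F=0$, and separately the number of edges lying on $\ell$, which is $|\ell|$. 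All of these are elementary lattice counts.

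Second, I would integrate out every edge appearing exactly twice, using the double-edge rule Lemma \ref{lm:double-edge-integration}. For $\mrm{SO}(3)$ this rule reads $\int \mrm{Tr}(AU)\,\mrm{Tr}(U^{-1}B)\,dU=\frac13\mrm{Tr}(AB)$, since $\int U_{ij}U_{kl}\,dU=\frac13\delta_{ik}\delta_{jl}$. Each application merges two traces into one and contributes a factor $\frac13$. I would perform these integrations in a fixed order: first within each transverse slab, gluing the four plaquettes around each cross-section ring, then along the longitudinal edges not on $\ell$.

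The key bookkeeping point is that gluing along a two-valent edge is only a pure merge while the two traces being glued are distinct. If both copies of $U$ already lie in the same trace, the rule instead yields a factor $\mrm{Tr}$ of a product split into two traces. On a closed surface this is exactly what happens once the glued region stops being a disk. I would therefore organize the gluing along a spanning tree of the dual graph, so that every step is a merge of distinct traces. The leftover gluings, which close up the torus, are then handled by the general form of the rule: there are exactly $E-(F-1)-|\ell|$ of them, $2$ per handle, and each produces an extra trace factor and a further $\frac13$. An edge meeting the same plaquette twice cannot occur, by the minimality of the tube.

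Third, after these integrations the integrand reduces, up to the accumulated power of $\frac13$, to an integral over the $|\ell|$ edge variables on $\ell$. Each of these edges appears three times: once in $\mrm{Tr}(Q_\ell)$ and twice in the glued surface traces. Here I would invoke the third-moment formula of Lemma \ref{lm:Haar-SO(3)-Moments}, namely $\int U_{ij}U_{kl}U_{mn}\,dU=\frac16\epsilon_{ikm}\epsilon_{jln}$. Applying it successively along $\ell$ contracts the chain of $\epsilon$ tensors. Adjacent pairs $\epsilon\epsilon$ contract to $2\delta$ (or to the $\delta\delta-\delta\delta$ combination), so that each edge of $\ell$ contributes an effective factor like $\frac13$ after the $\frac16$ is absorbed into the $2$. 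Finally the four corner contributions and the closing trace give constants. Combining these with the $\frac13$ factors from the second step, the total exponent should be $(E-|\ell|)-(\text{corrections})$. Checking with the counts from the first step should give exactly $2|\ell|-9$.

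The main obstacle is this third step together with the exact exponent. One has to verify that the $\epsilon$-contractions along $\ell$ are consistent in sign, so that the answer is $+(\frac13)^{2|\ell|-9}$ rather than zero or negative, and that the corner vertices (where $\ell$ turns, and where the tube has its four extra plaquettes) produce precisely the constant offset $-9$. I would first do the computation for a $1\times1$-thick tube around a small loop explicitly, to fix the corner and closing constants. I would then argue that each straight edge of $\ell$ and each transverse ring contributes the same local factor, which gives the linear dependence on $|\ell|$ by a transfer-type induction on the side lengths.
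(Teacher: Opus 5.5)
Your proposal has a genuine gap, and it sits exactly where the content of the lemma lies. You never carry out the count; you only assert that it ``should'' give $2|\ell|-9$, with the corners tuned to supply the offset. The two quantitative claims you do make are also wrong. Start from the combinatorics: $F=4|\ell|-16$. Every edge of the tube lies in exactly two tube plaquettes, so $E=2F=8|\ell|-32$, and since the tube is a torus, $V=E-F=4|\ell|-16$. Hence there are $7|\ell|-32$ doubly covered edges and $3|\ell|-16$ vertices off $\ell$.

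(i) After gluing along a spanning tree of the dual graph of the tube cut open along $\ell$, there remain $E-|\ell|-(F-1)=3|\ell|-15$ gluings, not ``$2$ per handle''. Each costs $\frac13$, and together they split off $3|\ell|-16$ traces $\mathrm{Tr}(I_3)=3$, one per vertex off $\ell$, leaving $\mathrm{Tr}(Q_\ell)^2$. The clean way to see this is that the tube cut along $\ell$ is an annulus both of whose boundary circles are $\ell$. Integrating out all its interior edges therefore yields $3^{-(7|\ell|-32)}\,3^{3|\ell|-16}\,\mathrm{Tr}(Q_\ell)^2=3^{-F}\,\mathrm{Tr}(Q_\ell)^2$.

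(ii) There is no effective factor $\frac13$ per edge of $\ell$. Each vertex $v$ of $\ell$ is an endpoint of two edges of $\ell$. Its three indices (one for each side of $\ell$ on the surface, one for $\mathrm{Tr}\,Q_\ell$) therefore enter as $\epsilon_{a_vb_vc_v}^2$. The $|\ell|$ vertex sums, each equal to $6$, exactly cancel the $|\ell|$ factors $\frac16$. Equivalently, $\int \mathrm{Tr}(Q_\ell)^3\,dQ=\mathbb{E}[\mathrm{Tr}(U)^3]=1$, since $Q_\ell$ is Haar distributed and the third tensor power of the vector representation of $\mathrm{SO}(3)$ contains the trivial one exactly once. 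This also settles the sign, since every term is a product of squares. So neither a small-case computation nor a transfer induction is needed.

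Carried out correctly, your route gives $\frac13\cdot 3^{-F}=(1/3)^{4|\ell|-15}$, not $(1/3)^{2|\ell|-9}$. The discrepancy is linear in $|\ell|$, so no choice of corner constants can close it, and the identity as stated cannot be proved.

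The paper argues via the full index expansion: second moments give one free index per vertex off $\ell$, third moments give a permutation $\sigma_e$ per edge of $\ell$, and $\prod_e \mathrm{sgn}(\sigma_e)=1$. That is the same computation in different bookkeeping. However, its counts $3^{3|\ell|-8}$ and $(1/3)^{5|\ell|-16}$ should be $3^{3|\ell|-16}$ and $(1/3)^{7|\ell|-32}$. Even taken at face value, with the $\frac13$ of the normalized trace, they multiply to $(1/3)^{2|\ell|-7}$ rather than the stated exponent.

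What the rest of the paper uses is only that each tube integral is positive and at least $c^{|\ell|}$ for a fixed $c>0$. The correct value supplies this, provided $(1/9)^{|\ell|}$ is replaced by $(1/81)^{|\ell|}$ in Proposition~\ref{lm:Tube-Lemma} and Lemma~\ref{lm:Input-for-complex-analysis-lemma}. The statement you should be aiming at is the one with exponent $4|\ell|-15$.
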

\begin{proof}
    
   There are precisely $2(d-2)$ such tubes since the tube lies in a $3$ dimensional subspace with $2$ of the dimension determined by the loop, and another dimension with $d-2$ choices, and for each such choice $2$ out of the $4$ options contain the minimal $4|\ell|-16$ number of plaquettes.

    \begin{figure}  \centering
    \includegraphics[width=10cm,height=10cm]{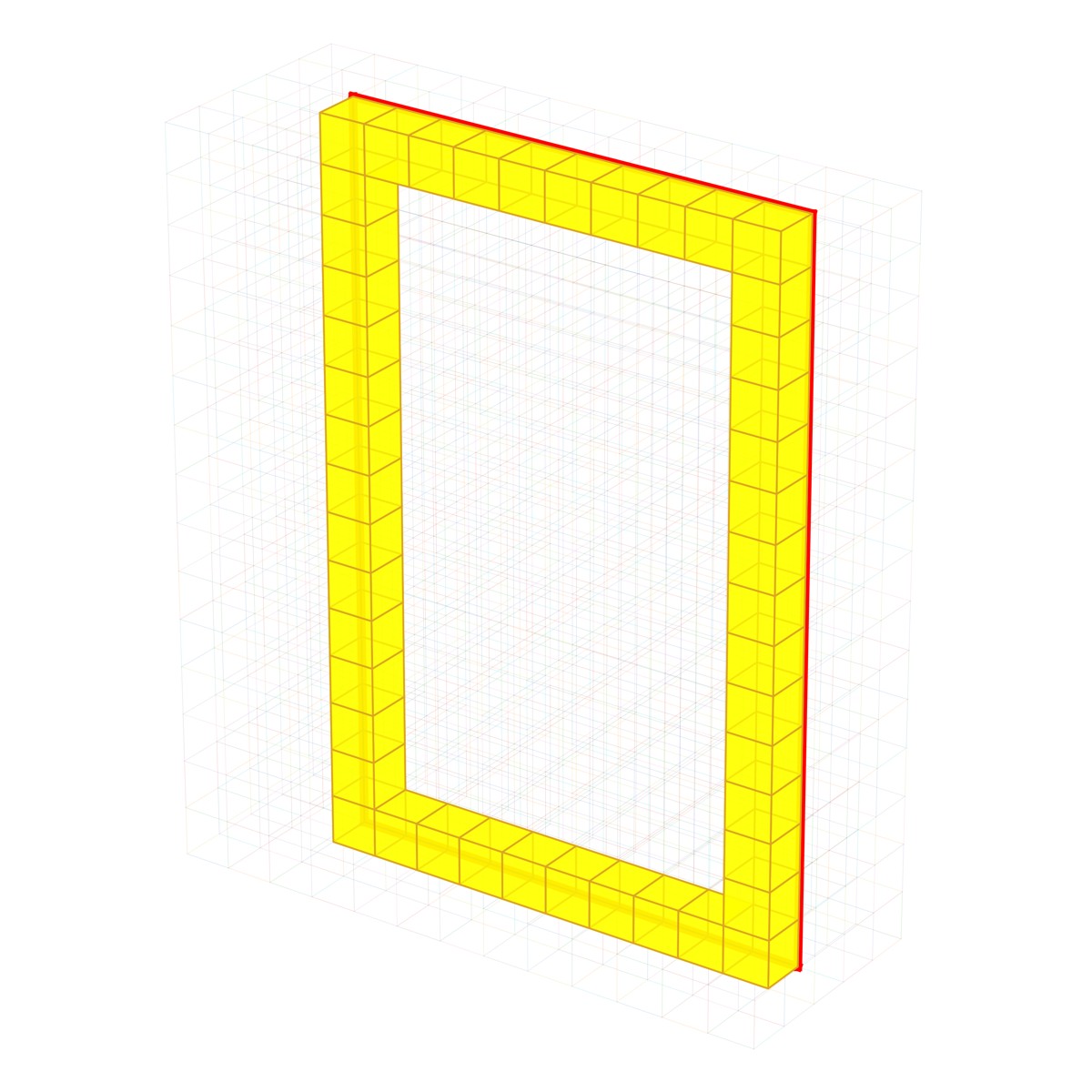}
    \caption{An example of one of the leading order `tube' cluster contributions. The cluster is the collection of yellow plaquettes corresponding to a plaquette count $K$ with $K(p)=1$ on all the yellow plaquettes and $K(p)=0$ otherwise. The rectangular loop $\ell$ corresponding to the Wilson loop observable shown in red lies on the surface of the tube.}
    \label{fig:tube-diagram}
    \end{figure}

    Now for each one of these $2(d-2)$ tubes applying, after expanding the traces in
    \begin{equs}
        \int W_{\ell}(Q) \prod_{p \in \mc{P}_{\Lambda_L}^+} \mrm{Tr}(Q_p)^{K(p)} dQ,
    \end{equs}
     we can apply Lemma \ref{lm:Haar-SO(3)-Moments} to characterize the nonzero terms which arise. From the second moment condition every vertex on the tube which doe not lie on $\ell$ must be associated to exactly one index, and from the third moment condition, for each edge $e \in \ell$ we need to assign a permutation $\sigma_e \in S_3$ which specifies the pairing of indices from one vertex to the next, and we need to assign a particular ordering of the indices for one initial vertex on the loop. Moreover, as these edges form a loop, the only consistent way to assign such permutations is to ensure that the product of the $\sigma_e$ going around the loop is the identity permutation. There are
     \begin{equs}
         3^{3|\ell|-8}\times 6^{|\ell|-1}\times 6
     \end{equs}
     ways to assign indices in such a manner. Moreover, for each such term, by Lemma \ref{lm:Haar-SO(3)-Moments} we get the expectation value 
    \begin{equs}
        \bigg(\frac{1}{3}\bigg)^{5|\ell|-16}\bigg(\frac{1}{6}\bigg)^{|\ell|} \prod_{e \in \ell} \mrm{sgn}(\sigma_e)=\bigg(\frac{1}{3}\bigg)^{5|\ell|-16}\bigg(\frac{1}{6}\bigg)^{|\ell|},
    \end{equs}
     remembering to multiply by the $2(d-2)$ factor for the choice of tube, and by $\frac{1}{3}$ since the Wilson loop observable is a normalized trace, the proof of Proposition \ref{lm:Tube-Lemma} is also complete.
\end{proof}

Now we recall some standard corollaries of the cluster expansion of \cite{osterwalderseiler1978}.
\begin{lemma}[Cluster expansion corollaries]\label{lm:cluster-expansion}
    There is a $\beta^* = \beta^*(d)>0$ such that 
    \begin{enumerate}
        \item For any loop $\ell \in \Lambda$, $\langle W_{\ell} \rangle_{\beta,\Lambda}$ is an analytic function of $\beta$ in $\mbb{D}(0,\beta^*)$ and for real $\beta$ in this disk, there is a unique infinite volume limit of the Yang--Mills measure such that,
    \begin{equs}
        \langle W_{\ell} \rangle_{\beta,\Lambda_L} \to \langle W_{\ell} \rangle_{\beta,\Lambda_{\infty}}
    \end{equs}
    uniformly on $\mbb{D}(0,\beta^*)$ as $L \to \infty$.
    \item For any $L <\infty$, $Z_{\beta,\Lambda_L} \neq 0$ for all $\beta \in \mathbb{D}(0,\beta^*)$, and thus,
    \begin{equs}
        \frac{1}{Z_{\beta,\Lambda_L}}
    \end{equs}
    is analytic on $\beta \in \mathbb{D}(0,\beta^*)$.
    \item There exists a constant $C_d>0$ such that for any rectangular loop $\ell$ with both side lengths at least $15$,
    \begin{equs}\label{eq:Wilson-upper-bound}
        |\langle W_{\ell} \rangle_{\beta,\Lambda_{\infty}}| \leq C_d^{|\ell|}\beta^{4|\ell|-16}.
    \end{equs}
    \end{enumerate}
\end{lemma}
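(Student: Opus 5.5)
The plan is to derive all three items from the Osterwalder--Seiler polymer (cluster) expansion, applied at complex $\beta$. We write $e^{\beta \mrm{Re}\Tr(Q_p)} = 1 + (e^{\beta \mrm{Re}\Tr(Q_p)}-1)$ and, for complex $\beta$, replace $\beta\mrm{Re}\Tr(Q_p)$ by $\beta \Tr(Q_p)$. This agrees with the action on the real axis because $\Tr(Q_p)$ is real for $\mrm{SO}(3)$. Expanding the product over plaquettes gives the partition function as a polymer gas. The polymers are connected sets $X$ of plaquettes, with activity
\begin{equs}
    z(X) = \int \prod_{p \in X} (e^{\beta \Tr(Q_p)}-1)\, dQ .
\end{equs}
Since $|e^{\beta \Tr Q_p}-1| \le 3|\beta| e^{3|\beta|}$, we get $|z(X)| \le (c|\beta|)^{|X|}$. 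Disjoint non-touching polymers factorize because the Haar integrals are independent.

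For item (2), the number of connected polymers of size $n$ containing a given plaquette is at most $C(d)^n$. Choosing $\beta^*$ small then makes the Koteck\'y--Preiss criterion hold uniformly for $|\beta|<\beta^*$. This gives the convergent representation $\log Z_{\beta,\Lambda_L} = \sum_{\text{clusters}} \phi(\cdot)\prod z$, with each cluster sum analytic in $\beta$. Hence $Z_{\beta,\Lambda_L} = \exp(\text{analytic}) \neq 0$ on $\mbb{D}(0,\beta^*)$, and $1/Z$ is analytic there.

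For item (1), $\langle W_\ell\rangle_{\beta,\Lambda_L}$ is a ratio $Z_{\ell}/Z$, where $Z_\ell$ is the same gas with polymers touching $\ell$ carrying the weight $W_\ell$. Equivalently, it is an exponential of a difference of cluster sums restricted to clusters touching $\ell$. These series converge absolutely and uniformly in $\beta$ and in $L$. Each term stabilizes once $L$ exceeds the cluster's diameter, and the tails are bounded by $(C|\beta|)^{n}$ summed over clusters of size $\geq n$ reaching the boundary. Uniform convergence as $L\to\infty$ on the disk then follows by the Weierstrass M-test, and the limit is analytic. Uniqueness of the infinite-volume limit for real $\beta$ is the standard Osterwalder--Seiler result, quoted as in the earlier remark.

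Item (3) is the substantive one, and the key input is equation \eqref{eq:zero-clusters} of Proposition \ref{lm:Tube-Lemma}. We Taylor-expand in $\beta$: the coefficient of $\beta^n$ in $\int W_\ell \prod_p e^{\beta \Tr Q_p} dQ$ is a sum of integrals $\int W_\ell \prod \Tr(Q_p)^{K(p)}$ with $|K|=n$. By \eqref{eq:zero-clusters} these vanish for $n<4|\ell|-16$, and dividing by $Z$, which is nonvanishing and analytic, preserves this vanishing order. Thus $\langle W_\ell\rangle_{\beta,\Lambda_L}$ is analytic on $\mbb{D}(0,\beta^*)$ with a zero of order at least $4|\ell|-16$ at $0$, and by the polymer bounds it is bounded by $1$ (or by $e^{c|\ell|}$) on that disk, uniformly in $L$. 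Schwarz's lemma applied on the disk of radius $\beta^*$ then gives $|\langle W_\ell\rangle| \le e^{c|\ell|}(\beta/\beta^*)^{4|\ell|-16}$, which is \eqref{eq:Wilson-upper-bound} with $C_d = e^c(\beta^*)^{-4}$ adjusted by a constant. We pass to $\Lambda_\infty$ using item (1).

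The main obstacle is obtaining the sup bound of the form $e^{c|\ell|}$ on the complex disk. For complex $\beta$ the measure is not positive, so $|W_\ell|\le 1$ does not give this directly. We would get it by writing $\langle W_\ell\rangle = \exp(\sum_{\text{clusters touching }\ell}\cdots)$, after writing $W_\ell=1+(W_\ell-1)$ or using a logarithm of the ratio. The number of clusters touching $\ell$ grows only linearly in $|\ell|$, giving $|\log\langle W_\ell\rangle| \le c|\ell|$ where $\langle W_\ell \rangle \neq 0$. If that is problematic, we would instead bound $|Z_\ell|$ and $|Z|^{-1}$ separately, each by $e^{c|\ell|}$ times a common factor. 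The hypothesis that the sides are at least $15$ only ensures that \eqref{eq:zero-clusters} applies; Proposition \ref{lm:Tube-Lemma} as stated assumes sides at least $100$, but the same argument works for sides at least $15$.
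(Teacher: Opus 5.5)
Your proposal is correct. Items (1) and (2) agree with the paper, which simply cites Osterwalder--Seiler (Theorems 3.6, 3.7 and Lemma 3.2); your Koteck\'y--Preiss sketch is the standard content of those citations.

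For item (3) you take a genuinely different route. The paper bounds the expansion $\langle W_\ell\rangle_{\Lambda_L,\beta}=\sum_{\mathcal C}\int W_\ell\prod_{p\in\mathcal C}(e^{\beta\mathrm{Tr}(Q_p)}-1)\,dQ\cdot Z_{\Lambda_L\setminus\mathcal C}/Z_{\Lambda_L}$ term by term. Each cluster integral is Taylor expanded in $\beta$, its coefficients below order $4|\ell|-16$ vanish by \eqref{eq:zero-clusters}, and what remains is summed using the ratio bound on $Z_{\Lambda_L\setminus\mathcal C}/Z_{\Lambda_L}$ and a count of clusters. You instead use \eqref{eq:zero-clusters} only once, for the full numerator $\int W_\ell\prod_p e^{\beta\mathrm{Tr}(Q_p)}\,dQ$. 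The maximum principle applied to $\beta^{16-4|\ell|}\langle W_\ell\rangle_{\beta,\Lambda_L}$ then turns a crude bound $e^{c|\ell|}$ on the complex disk into the factor $(|\beta|/\beta^*)^{4|\ell|-16}$.

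This is cleaner than the paper's route. It sidesteps the bookkeeping for small clusters carrying large multiplicities $K(p)$, which the paper passes over with ``readily follows''. It also directly produces the sup bound on $\phi_\ell$ used later in Lemma \ref{lm:Input-for-complex-analysis-lemma}.

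What it costs is a uniform-in-$L$ sup bound at complex $\beta$, and here your first suggestion does not work. $\langle W_\ell\rangle$ is not the exponential of a convergent cluster sum. The empty cluster contributes $\int W_\ell\,dQ=0$, the expectation vanishes to order $4|\ell|-16$ at $\beta=0$, and it may have complex zeros. So $|\log\langle W_\ell\rangle|\le c|\ell|$ is false.

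Your fallback is the correct one, and it is exactly the paper's ingredient. In the expansion above, $|Z_{\Lambda_L\setminus\mathcal C}/Z_{\Lambda_L}|\le e^{c(|\mathcal C|+|\ell|)}$ from the cluster expansion of $\log Z$. Also $\bigl|\int W_\ell\prod_{p\in\mathcal C}(e^{\beta\mathrm{Tr}(Q_p)}-1)\,dQ\bigr|\le(c|\beta|)^{|\mathcal C|}$. Summing over clusters attached to $\ell$ gives at most $e^{c'|\ell|}$, uniformly in $L$. So both proofs rest on the same two inputs, the vanishing \eqref{eq:zero-clusters} and the partition-function ratio bound; yours spends them more economically.
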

\begin{proof}
    (1) is exactly \cite[Theorem 3.6, Theorem 3.7]{osterwalderseiler1978}, (2) follows at once from \cite[Lemma 3.2]{osterwalderseiler1978}. (3) is almost direct corollary of \cite[Theorem 3.1]{osterwalderseiler1978} once we recall from Proposition \ref{lm:Tube-Lemma} that any nonzero term in the cluster expansion for $\langle W_{\ell} \rangle_{\beta,\Lambda}$ corresponds to a cluster with at least $4|\ell|-16$ plaquettes, except we actually need a slightly more refined version of \cite[Theorem 3.1]{osterwalderseiler1978}. From the cluster expansion,
    \begin{equs}
        \langle W_{\ell} \rangle_{\Lambda_L,\beta} &= \sum_{\substack{\mc{C} \subset \mc{P}_{\Lambda_L}\\ \mc{C} \cup \ell \text{ connected}}} \int W_{\ell}(Q) \prod_{p \in \mc{C}} (e^{\beta \mrm{Tr}(Q_p)}-1) dQ \frac{Z_{\Lambda_L \backslash \mc{C}}}{Z_{\Lambda_L}}\\
        &=\sum_{\substack{\mc{C} \subset \mc{P}_{\Lambda_L}\\ \mc{C} \cup \ell \text{ connected}}} \sum_{K: \mrm{Supp}(K)=\mc{C}}\frac{\beta^K}{K!}\int W_{\ell}(Q) \prod_{p \in \mc{P}_{\Lambda_L}^+} \mrm{Tr}(Q_p)^{K(p)} dQ \frac{Z_{\Lambda_L \backslash \mc{C}}}{Z_{\Lambda_L}}.
    \end{equs}
    Then from \cite[Lemma 3.2]{osterwalderseiler1978}, $|\frac{Z_{\Lambda_L \backslash \mc{C}}}{Z_{\Lambda_L}}| \leq 2^{|\mc{C}|}$, the number of cluster $\mc{C}$ of size $n$ is bounded by $C_d^{|\ell|+n}$ for a dimensional constant $C_d$, tand $\frac{1}{K!}|\mrm{Tr}(Q_p)^{K(p)}| \leq 3^{|K|} \leq e^{3|\mc{C}|}$. So the estimate readily follows for $\beta< \beta^*(d)$ for sufficiently small $\beta^*(d)$, and a tweaked constant $C_d$.
\end{proof}

As a direct corollary of the leading tube cluster analysis as well as general cluster expansion analysis (Lemmas \ref{lm:Tube-Lemma} and \ref{lm:cluster-expansion}) we have the following lemma.
\begin{lemma}\label{lm:Input-for-complex-analysis-lemma}
    Suppose $\ell$ is a rectangular loop in $\Lambda$ with both side-lengths at least $15$. Then
    \begin{equs}
        \phi_{\ell}(\beta):=\beta^{16-4|\ell|}\langle W_{\ell}\rangle_{\beta,\Lambda_{\infty}}
    \end{equs}
    extends to a holomorphic function on the disk $\mbb{D}(0,\beta^*)$ with
    \begin{equs}
        \phi_{\ell}(0)\geq \bigg(\frac{1}{9}\bigg)^{|\ell|},
    \end{equs}
    and there exists a constant $C_d>0$
    \begin{equs}
        \sup_{\beta \in \mbb{D}(0,\beta^*)} |\phi_{\ell}(\beta)| \leq C_d^{|\ell|}
    \end{equs}
\end{lemma}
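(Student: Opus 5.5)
The idea is to work at finite volume $\Lambda_L$, where every object is an honest convergent power series in $\beta$, and then pass to $L\to\infty$ using the uniform convergence of Lemma \ref{lm:cluster-expansion}(1).

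\textbf{Step 1: finite volume, and $\phi_\ell(0)$.} Fix $L$ large enough that $\ell$ lies strictly inside $\Lambda_L$. I will write
\begin{equs}
\langle W_\ell\rangle_{\beta,\Lambda_L} = \frac{1}{Z_{\beta,\Lambda_L}}\sum_{K} \frac{\beta^{|K|}}{K!}\int W_\ell(Q)\prod_{p}\mrm{Tr}(Q_p)^{K(p)}\,dQ .
\end{equs}
The numerator is an entire function of $\beta$, since the integrand is bounded. The factor $1/Z_{\beta,\Lambda_L}$ is analytic on $\mbb{D}(0,\beta^*)$ by Lemma \ref{lm:cluster-expansion}(2), and $Z_{0,\Lambda_L}=1$.

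By \eqref{eq:zero-clusters}, every term with $|K|<4|\ell|-16$ vanishes. Hence the numerator equals $\beta^{4|\ell|-16}g_L(\beta)$ with $g_L$ entire, so $\phi_{\ell,L}(\beta):=\beta^{16-4|\ell|}\langle W_\ell\rangle_{\beta,\Lambda_L}$ has a removable singularity at $0$. Its value there is
\begin{equs}
\phi_{\ell,L}(0)=\sum_{|K|=4|\ell|-16}\frac{1}{K!}\int W_\ell\prod_p \mrm{Tr}(Q_p)^{K(p)}\,dQ .
\end{equs}
By the pinched-corner lemmas, the only contributing $K$ are the tubes, and these have $K!=1$. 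So this equals the sum in \eqref{eq:first-nonzero-clusters}, which exceeds $(1/9)^{|\ell|}$. This value does not depend on $L$ once $\ell$ is well inside. (The Proposition is stated for side lengths at least $100$; for side lengths between $15$ and $100$ I would simply note that the same tube analysis applies, or restrict to side lengths at least $100$, which is all that is needed later.)

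\textbf{Step 2: the uniform bound.} Here I need a bound on $\phi_{\ell,L}$ on the whole disk that is uniform in $L$. The proof of Lemma \ref{lm:cluster-expansion}(3) works verbatim at finite $L$ and for complex $\beta$: the bounds on $Z_{\Lambda_L\setminus\mc C}/Z_{\Lambda_L}$ hold on the complex disk. This gives $|\langle W_\ell\rangle_{\beta,\Lambda_L}|\le C_d^{|\ell|}|\beta|^{4|\ell|-16}$ on $\mbb{D}(0,\beta^*)$, and hence $|\phi_{\ell,L}|\le C_d^{|\ell|}$ there.

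If one prefers to rely only on the stated bounds, the maximum principle does the job. Bound $|\phi_{\ell,L}|$ on a slightly smaller circle $|\beta|=r<\beta^*$ by $r^{16-4|\ell|}C_d^{|\ell|}$, which is again of the form $C'^{|\ell|}$. Shrinking $\beta^*$ slightly then costs nothing.

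\textbf{Step 3: passing to infinite volume.} By Lemma \ref{lm:cluster-expansion}(1), $\langle W_\ell\rangle_{\beta,\Lambda_L}\to\langle W_\ell\rangle_{\beta,\Lambda_\infty}$ uniformly on the disk. It follows that $\phi_{\ell,L}\to\phi_\ell$ uniformly on compact subsets away from $0$. The $\phi_{\ell,L}$ are uniformly bounded holomorphic functions, so by Montel's theorem (or Cauchy's formula on circles) they converge locally uniformly on the whole disk to a holomorphic extension of $\phi_\ell$. This extension inherits both the bound $C_d^{|\ell|}$ and the value $\phi_\ell(0)=\lim_L\phi_{\ell,L}(0)>(1/9)^{|\ell|}$.

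\textbf{Main obstacle.} The delicate point is Step 2: the bound has to be uniform in $L$ and valid for complex $\beta$. This requires checking that the Osterwalder--Seiler estimates on $Z_{\Lambda_L\setminus\mc C}/Z_{\Lambda_L}$ hold for complex coupling, which their Lemma 3.2 provides. The identification of $\phi_\ell(0)$ with the tube sum is straightforward once Proposition \ref{lm:Tube-Lemma} is in hand.
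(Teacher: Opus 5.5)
Your proposal is correct and follows the paper's route. In finite volume, Proposition \ref{lm:Tube-Lemma} kills the Taylor coefficients below order $4|\ell|-16$, $Z_{0,\Lambda_L}=1$ identifies the leading coefficient with the tube sum, the limit $L\to\infty$ uses the uniform convergence of Lemma \ref{lm:cluster-expansion}(1), and the sup bound comes from the cluster-expansion estimate \eqref{eq:Wilson-upper-bound}. Your bookkeeping is cleaner than the paper's: you track the $1/K!$ weights and work with Taylor coefficients rather than derivatives, whereas the paper's formula for $w_L^{(n)}(0)$ drops the multinomial factors and conflates $w_L^{(4|\ell|-16)}(0)$ with $\phi_\ell(0)$. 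You also rightly flag the mismatch between side length $15$ here and $100$ in the Proposition, and side lengths at least $100$ indeed suffice for the later proof of Theorem \ref{thm:Main-theorem}.
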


\begin{proof}
    Defining $w_L(\beta):=\langle W_{\ell}\rangle_{\beta,\Lambda_{L}}$ we can repeatedly differentiate and apply Proposition \ref{lm:Tube-Lemma} to deduce that,
    \begin{equs}
        w_L^{(n)}(0) &= \sum_{k=0}^{n} \binom{n}{k}\frac{d^k}{d\beta^k}\bigg|_{\beta=0} Z_{\beta,\Lambda_L} ^{-1} \sum_{|K|=n-k} \int W_{\ell}(Q) \prod_{p \in \mc{P}_{\Lambda_L}^+} \mrm{Tr}(Q_p)^{K(p)} dQ\\
        &= 0
    \end{equs}
    if $n<4|\ell|-16$. And moreover by Proposition \ref{lm:Tube-Lemma} 
    and the obvious fact that $Z_{0,\Lambda_L}=1$, we have $w_L^{(4|\ell|-16)}(0)\geq (\frac{1}{9})^{|\ell|}$. Hence taking $L \to \infty$ it follows that $\phi_{\ell}(\beta)=\lim_{L \to \infty} \beta^{4|\ell|-16}$ extends to analytic function on $\mbb{D}(0,\beta^*(d))$, and that $\phi_{\ell}(0) \geq (\frac{1}{9})^{|\ell|} $. The fact that $\sup_{\beta \in \mbb{D}(0,\beta^*)} |\phi_{\ell}(\beta)| \leq C_d^{|\ell|}$ is an immediate corollary of \eqref{eq:Wilson-upper-bound}.
\end{proof}

\subsection{Step 2: A key insight from complex analysis}\label{subsection:complex-analysis}\label{Section:Step-2}

In this step, the goal is to use the cluster analysis of the previous section to derive a perimeter law lower bound for any fixed rectangular loop for almost every $\beta$. The key input we need besides the cluster expansion and computation of the tube cluster terms from the previous section is the following complex analysis lemma. The proof is elementary relying on standard results of complex analysis found in \cite{ahlfors1979complex}, as well as one additional lemma known as Cartan's lemma \cite[Theorem 9]{levin1980distribution} which controls the set where a polynomial is unusually small.

\begin{lemma}\label{lm:Complex-Analysis-Lemma}
    Suppose $f$ is a holomorphic function on the disk $\mbb{D}(0,r)$ with $|f(0)|\geq m>0$ and $\sup_{z \in \mbb{D}(0,r)}|f(z)|\leq M <\infty$. Then for any $\epsilon \in (0,r)$ there exist a collection of disks $\{D_i\}$ of radii $r_i$ with $\sum_i r_i <\epsilon$ such that
    \begin{equs}
        |f(z)|> m (m/M)^{\frac{\log(\frac{3er}{2\epsilon})}{\log(2)}+2}
    \end{equs}
     on $\mathbb{D}(0,r/4) \backslash(\bigcup_{i} D_i)$
\end{lemma}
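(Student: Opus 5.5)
The plan is to factor out the zeros of $f$ in a smaller disk. Jensen's formula bounds how many there are, Cartan's lemma controls the polynomial carrying them, and Harnack's inequality controls the zero-free cofactor.

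\emph{Step 1 (counting zeros).} Let $a_1,\dots,a_n$ be the zeros of $f$ in $\overline{\mbb{D}}(0,r/2)$, counted with multiplicity. Apply Jensen's formula on circles of radius $\rho\uparrow r$. Each such zero contributes at least $\log(\rho/|a_i|)\geq \log(2\rho/r)$, so letting $\rho\to r$ gives
\begin{equs}
n\log 2\leq \log M-\log|f(0)|\leq \log(M/m), \qquad\text{hence}\qquad n\leq \frac{\log(M/m)}{\log 2}.
\end{equs}

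\emph{Step 2 (the zero-free cofactor).} Write $f=P\,g$ with $P(z)=\prod_{i}(z-a_i)$, so $g$ is holomorphic on $\mbb{D}(0,r)$ and has no zeros in $\mbb{D}(0,r/2)$.
\begin{itemize}
\item Upper bound: on $|z|=\rho$ close to $r$ we have $|P(z)|\geq(\rho-r/2)^n$. The maximum principle, after letting $\rho\to r$, gives $|g|\leq M(2/r)^n$ on $\mbb{D}(0,r)$.
\item Lower bound at the origin: $|g(0)|=|f(0)|/\prod|a_i|\geq m(2/r)^n$.
\item Harnack: set $u=\log\big(M(2/r)^n\big)-\log|g|$. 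This is a nonnegative harmonic function on $\mbb{D}(0,r/2)$ with $u(0)\leq\log(M/m)$. On $\mbb{D}(0,r/4)$ the Harnack constant is $(R+\rho)/(R-\rho)=3$, so $u\leq 3\log(M/m)$ there.
\end{itemize}
Consequently $|g(z)|\geq M(2/r)^n(m/M)^3$ on $\mbb{D}(0,r/4)$.

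\emph{Step 3 (Cartan's lemma).} Apply Cartan's lemma \cite[Theorem 9]{levin1980distribution} to the monic polynomial $P$ of degree $n$ with parameter $H$ slightly less than $\epsilon/2$. Outside a family of disks $\{D_i\}$ with $\sum_i r_i\leq 2H<\epsilon$, it gives $|P(z)|>(H/e)^n$. Combining with Step 2, on $\mbb{D}(0,r/4)\setminus\bigcup_i D_i$,
\begin{equs}
|f(z)|> \Big(\frac{2H}{er}\Big)^n M\Big(\frac mM\Big)^3 .
\end{equs}
Now use the zero count from Step 1 (note $2H/(er)<1$):
\begin{equs}
\Big(\frac{2H}{er}\Big)^{n}\geq \Big(\frac mM\Big)^{\log(er/2H)/\log 2}.
\end{equs}
Since $M(m/M)^3=m(m/M)^2$, this yields $|f(z)|>m(m/M)^{\log(c\,r/\epsilon)/\log 2+2}$ with $c$ just above $e$. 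That is the stated bound; the paper's constant $\tfrac{3e}{2}$ leaves room for this slack.

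\emph{Main obstacle.} The only delicate point is bookkeeping of constants. One must choose the radius of the zero-extraction disk ($r/2$) and the Harnack disk ($r/4$) so that the exponent of $m/M$ comes out as claimed, and keep the strict inequality $\sum r_i<\epsilon$ by taking $H<\epsilon/2$. Steps 1–3 are otherwise standard.
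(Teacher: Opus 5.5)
Your proof is correct and follows the same route as the paper's: Jensen's formula bounds the number of zeros in the disk of radius $r/2$, you factor these zeros out, Harnack's inequality with constant $3$ on the disk of radius $r/4$ handles the zero-free cofactor, and Cartan's lemma handles the factor carrying the zeros. The one difference is that you divide by the monic polynomial $\prod_i (z-a_i)$ and apply the maximum principle on the full disk of radius $r$, where the paper uses a Blaschke product on the disk of radius $r/2$; this avoids the paper's $(2/3)^N$ loss from the Blaschke denominators and gives the slightly better constant $e$ in place of $\tfrac{3e}{2}$.
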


\begin{proof}
    First a standard application of Jensen's formula shows that the number of zeros (counted with multiplicity) of $f$ in the disk of half the radius $\mathbb{D}(0,r/2)$, $N_{r/2}(f)$, is bounded above as follows,
    \begin{equs}
        N_{r/2}(f) \leq \frac{\log(M/m)}{\log(2)}.
    \end{equs}
    So defining the rescaled $\tilde{f}(z):=f(rz/2)$, then $\tilde{f}$ is holomorphic on the unit disk with zeros on the unit disk $z_1,\dots,z_N$ with $N \leq \frac{\log(M/m)}{\log(2)}$.

    From the considerations of the last paragraph, we can write the Blaschke product factorization 
    \begin{equs}
        \tilde{f}(z)= e^{g(z)} \prod_{j=1}^{N} \frac{z-z_j}{\overline{z_j}z-1},
    \end{equs}
    where $g$ is holomorphic on the unit disk. Moreover by the maximum modulus principle $\sup_{z \in \mbb{D}(0,1)} |e^{g(z)}| \leq M$.

    Next we observe that $u(z):=\log M -\mrm{Re}\, g(z)$ is a nonnegative harmonic function on $\mbb{D}(0,1)$, and plugging in $z=0$ into the factorization and recalling that $|z_j|<1$ for each $j$, we have $|e^{g(0)}| \geq m$, and so $u(0) \leq \log(M/m)$. Thus by Harnack's inequality,
    \begin{equs}
        |u(z)| \leq 3 u(0) \leq 3\log(M/m),
    \end{equs}
    uniformly on $\mathbb{D}(0,\frac{1}{2})$. As a result,
    \begin{equs}
        |e^{g(z)}| \geq \frac{m^3}{M^2}. 
    \end{equs}

    Next we control the set where $B(z):=\prod_{j=1}^{N} \frac{z-z_j}{\overline{z_j}z-1}$ is unusually small. For $|z|\leq \frac{1}{2}$, $|\overline{z_j}z-1| \leq \frac{3}{2}$, so
    \begin{equs}
        \prod_{j=1}^{N} \frac{1}{|\overline{z_j}z-1|}\geq (2/3)^N.
    \end{equs}
     Next by Cartan's lemma \cite[Theorem 9]{levin1980distribution}, for any $H \in (0,1)$, there exist a collection of disks $\{D_i\}$ with radii $r_i$ satisfying $\sum_{i} r_i \leq 2H$ such that,
    \begin{equs}
        \prod_{j=1}^{N}|z-z_j| \geq (H/e)^N
    \end{equs}
    outside of these disks. As a result
    \begin{equs}
        |B(z)| &\geq \bigg(\frac{2H}{3e}\bigg)^N
    \end{equs}

    Putting everything together and plugging in $N \leq \frac{\log(M/m)}{\log(2)}$
    \begin{equs}
        |\tilde{f}(z)| \geq m (m/M)^{\frac{\log(\frac{3e}{2H})}{\log(2)}+2}
    \end{equs}
    on $\mbb{D}(0,\frac{1}{2}) \backslash (\cup_i D_i)$. Rescaling the domain by a factor $r/2$ and setting $\epsilon=rH$ completes the proof.
\end{proof}

\begin{lemma}[Single rectangle perimeter law lower bound]\label{lm:single-rectangle-bound}
    For any $d \geq 3$, there exists $\beta^*(d)>0$ such that for any $\epsilon \in (0,\beta^*(d))$ there exists a constant $c=c(\beta,\epsilon,d)>0$ such that for any fixed rectangular loop $\ell \subset \Lambda$,
    \begin{equs}
        |\langle W_{\ell}\rangle_{\beta,\Lambda}| \geq c^{|\ell|},
    \end{equs}
    for all $\beta \in A_{\epsilon,\ell}$ for some measurable subset $A_{\epsilon,\ell}\subset (0,\beta^*)$ with measure $|A_{\epsilon,\ell}|>\beta^*-\epsilon$.
\end{lemma}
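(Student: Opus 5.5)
The plan is to combine Lemma \ref{lm:Input-for-complex-analysis-lemma} with Lemma \ref{lm:Complex-Analysis-Lemma}. Here $\beta^*$ is taken slightly smaller than the one in Lemma \ref{lm:cluster-expansion}, so that $\phi_\ell$ is holomorphic and bounded on a disk of radius $r$ slightly larger than $4\beta^*$. Concretely, let $r_0$ denote the radius from Lemma \ref{lm:cluster-expansion}, and set the new $\beta^*:=r_0/4$ with $r:=r_0$, so that $\mbb{D}(0,r/4)=\mbb{D}(0,\beta^*)$.

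\textbf{Applying the complex analysis lemma.} For a fixed rectangular loop $\ell$ with both side lengths at least $15$, Lemma \ref{lm:Input-for-complex-analysis-lemma} gives that $f=\phi_\ell$ is holomorphic on $\mbb{D}(0,r)$ with
\[
|f(0)|\geq m:=9^{-|\ell|}, \qquad \sup_{\mbb{D}(0,r)}|f|\leq M:=C_d^{|\ell|}.
\]
Given $\epsilon\in(0,\beta^*)$, we apply Lemma \ref{lm:Complex-Analysis-Lemma} with $\epsilon/2$ in place of $\epsilon$. This produces disks $D_i$ with $\sum_i r_i<\epsilon/2$ such that on $\mbb{D}(0,\beta^*)\setminus\bigcup_i D_i$,
\[
|\phi_\ell(\beta)|> 9^{-|\ell|}\,(9C_d)^{-|\ell|\left(\frac{\log(3er/\epsilon)}{\log 2}+2\right)} =: c_1^{|\ell|},
\]
where $c_1=c_1(\epsilon,d)>0$ does not depend on $\ell$. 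This independence is the whole point: both $\log(M/m)$ and $\log(1/m)$ are linear in $|\ell|$, while the Cartan exponent depends only on $\epsilon$ and $r$.

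\textbf{Defining the good set.} Set
\[
A_{\epsilon,\ell}:=(0,\beta^*)\setminus\bigcup_i D_i .
\]
Each disk meets the real line in an interval of length at most $2r_i$, so the excluded set has Lebesgue measure at most $2\sum_i r_i<\epsilon$. Hence $|A_{\epsilon,\ell}|>\beta^*-\epsilon$, and the set is measurable because it is an interval minus countably many intervals.

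\textbf{Returning to the Wilson loop.} For $\beta\in A_{\epsilon,\ell}$ we have
\[
|\langle W_\ell\rangle_{\beta,\Lambda_\infty}|=\beta^{4|\ell|-16}|\phi_\ell(\beta)|\geq \beta^{4|\ell|}c_1^{|\ell|}\geq (\beta^4c_1)^{|\ell|},
\]
using $\beta<1$. Taking $c=\beta^4c_1$ gives the claim in infinite volume. In finite volume, the uniform convergence from Lemma \ref{lm:cluster-expansion} transfers the bound for $L$ large, possibly after halving the constant. Loops with a side shorter than $15$ need a separate argument, and this is the step I expect to be the main obstacle. Proposition \ref{lm:Tube-Lemma} requires side lengths of at least $100$, so Lemma \ref{lm:Input-for-complex-analysis-lemma} implicitly does too. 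The cleanest fix is to restrict the statement to loops with both sides at least $100$, since only large rectangles are needed in Step 3. Otherwise, one could note that for the finitely many short widths, an analogous leading-order nonzero cluster coefficient gives the same holomorphic input with a different power of $\beta$, and rerun the same argument.
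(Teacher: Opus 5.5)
Your proposal is correct and is essentially the paper's proof. You take $f=\phi_\ell$, $m=9^{-|\ell|}$, $M=C_d^{|\ell|}$ and $r$ equal to the radius from Lemma~\ref{lm:cluster-expansion}, feed them into Lemma~\ref{lm:Complex-Analysis-Lemma}, set the new $\beta^*$ to a quarter of that radius, and discard the real traces of the exceptional disks. You are also more careful than the paper on three points it glosses over: reinserting the factor $\beta^{4|\ell|-16}$ (which is exactly why $c=\beta^4c_1$ depends on $\beta$), the factor $2$ between radii and real-line measure (handled by using $\epsilon/2$), and the implicit restriction to rectangles with both sides at least $100$ inherited from Proposition~\ref{lm:Tube-Lemma}, which, as you note, is all that Step 3 needs.
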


\begin{proof}
    This proposition directly follows from plugging in the cluster expansion input, Lemma \ref{lm:Input-for-complex-analysis-lemma} into the previous complex analysis lemma, Lemma \ref{lm:Complex-Analysis-Lemma} with $f(z)=\phi_{\ell}(z)$, $m=(\frac{1}{9})^{|\ell|}$, $M=C_d^{|\ell|}$, $r=\beta^*(d)$ from Lemma \ref{lm:cluster-expansion}. For these choices of parameters, 
    \begin{equs}
        m (m/M)^{\frac{\log(\frac{3er}{2\epsilon})}{\log(2)}+2} \geq c(\beta,\epsilon,d)^{|\ell|}
    \end{equs}
    for some constant $c(\beta,\epsilon,d)$. And the subset of the interval where the desired lower bound doesn't hold is the intersection of a collection of disks $\{D_i\}$ whose diameters sum to $\epsilon$ with the interval $(0,\beta^*)$ which clearly has one dimensional Lebesgue measure at most $\epsilon$. The $\beta^*(d)$ of this proposition is $\frac{1}{4}$ of the $\beta^*(d)$ of Lemma \ref{lm:cluster-expansion}.
\end{proof}

\subsection{Step 3: Some corollaries of reflection positivity}\label{subsection:Ref-Pos}

In this final step, we use reflection positivity considerations to go from the single loop perimeter law bound Lemma \ref{lm:single-rectangle-bound}, to the uniform bound on the quark-antiquark potential Theorem \ref{thm:Main-theorem}.

We review the reflection positivity property of Lattice Yang--Mills theory which is proven in \cite{osterwalderseiler1978,glimm2012quantum}. The next few results we review are very general and apply to pure lattice Yang--Mills theories with any gauge group either on a finite periodic lattice, or in the infinite volume limit which exists in the strong coupling regime we consider. Moreover since every expectation in this section is with respect to $\langle \cdot\rangle_{G,\beta,\Lambda_{\infty}}$, we will simply denote expectations by $\langle \cdot\rangle$.

\begin{definition}[Coordinate reflections]
    For any coordinate hyperplane in $\Z^d$ of the form $P=\{(x_1,\dots,x_d) \in \R^d:x_i=a\}$ for $i \in [d]$ and $a \in \Z \cup \frac{1}{2}\Z$, we let $\Theta_P:E_{\Lambda} \to E_{\Lambda}$ denote the map which reflects edges over $P$ (keeping track of orientation so that edges parallel to the hyperplane reverse orientation after reflection). Moreover for any configuration $Q=(Q_e)_{e \in E_{\Lambda}^+} \in G^{E_{\Lambda}^+}$
we abuse notation by defining the reflected configuration $\Theta_P Q$ by $(\Theta_P Q)_e :=Q_{\Theta_p e} $, and lastly for any local observable $f: G^{E_{\Lambda}}\to \C$ we let
\begin{equs}
    (\Theta_P f)(Q):=f(\Theta_P Q).
\end{equs}
\end{definition}

\begin{definition}[Matrix-valued local observables]
    A matrix valued local observable is a function $F: G^{E_{\Lambda}^+} \to \C^{n \times n}$ such that every entry $F_{ij}$ is a local observable. We say that $F$ satisfies a property (such as gauge invariance or that it depends only on certain edges) if and only if every entry satisfies this property, and similarly we define $\Theta_P F$ by applying $\Theta_P$ to each entry.
\end{definition}

\begin{lemma}[Reflection Positivity]\label{lm:reflec-pos}
    For any hyperplane $P=\{(x_1,\dots,x_d) \in \R^d:x_i=a\}$ with $a \in \Z$ and any matrix valued local observable $F$ which only depends on edges on one side of $P$ (which may include edges contained in $P$), we have $\langle \mrm{Tr}( F (\Theta_P F)^*)\rangle \in \R$ with,
    \begin{equs}\label{eq:Reflection-Positivity}
        \langle \mrm{Tr}( F (\Theta_P F)^*)\rangle \geq 0.
    \end{equs}
    As a direct corollary we have the Cauchy-Schwarz inequality. That is for any matrix-valued local observables $F$ and $G$ which both only depend on edges on one side of $P$, we have,
    \begin{equs}\label{eq:Cauchy-Schwarz}
        |\langle \mrm{Tr}(F (\Theta_P G)^* )\rangle|^2 \leq \langle \mrm{Tr}(F (\Theta_P F)^*) \rangle \langle \mrm{Tr}(G (\Theta_P G)^*) \rangle.
    \end{equs}
    Moreover, if $\mathrm{Tr}(F (\Theta_P F)^*)$, $\mathrm{Tr}(F (\Theta_P G)^*)$, $\mathrm{Tr}(G (\Theta_P F)^*)$, and $\mathrm{Tr}(G (\Theta_P G)^*)$ are all gauge invariant, then \eqref{eq:Reflection-Positivity} and \eqref{eq:Cauchy-Schwarz} also hold if $P$ is half integer hyperplane ($P=\{(x_1,\dots,x_d) \in \R^d:x_i=a\}$ with $a \in \frac{1}{2}\Z$).
\end{lemma}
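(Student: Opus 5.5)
We will prove reflection positivity first in finite volume with periodic boundary conditions on the torus $\Lambda_L^{\mathrm{per}}$, and then pass to $\Lambda_\infty$. In the strong coupling regime the infinite volume limit is unique, so the finite volume expectations of local observables converge to $\langle\cdot\rangle$. Since nonnegativity and the Cauchy--Schwarz inequality are preserved under limits, it suffices to work on the torus.

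\emph{Integer hyperplanes.} Fix $P=\{x_i=a\}$ with $a\in\Z$. Split the edge set into $E_+$ (edges strictly on one side), $E_-=\Theta_P E_+$, and $E_0$ (edges lying in $P$). The plaquettes split likewise: those in $P$ or on the $+$ side, those on the $-$ side, and no plaquette crosses $P$ except through edges in $E_0$ (plaquettes transverse to $P$ have their two transverse edges on one side). Hence the action decomposes as
\begin{equs}
S = S_+ + \Theta_P S_+ ,
\end{equs}
up to the convention for how plaquettes contained in $P$ are split between the two halves (assign half their weight to each). We will use that $\mrm{Re}\,\mrm{Tr}$ is invariant under inversion and transposition, so that $\Theta_P$ composed with orientation reversal leaves these terms real. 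We then condition on $Q|_{E_0}$. Given these variables, the $+$ and $-$ configurations are independent and are mapped to each other by $\Theta_P$. Writing $\mc{F}(Q_0)=\int F\, e^{S_+}\,dQ_+$, we get
\begin{equs}
\langle \mrm{Tr}(F(\Theta_P F)^*)\rangle = Z^{-1}\int \mrm{Tr}\big(\mc{F}(Q_0)\,\mc{F}(\tilde Q_0)^*\big)\,dQ_0 ,
\end{equs}
where $\tilde Q_0$ denotes the reflected values on $P$. For edges in $P$, reflection maps $e$ to $e$, so $\tilde Q_0=Q_0$; the orientation bookkeeping has to be checked carefully here. The integrand is then $\|\mc{F}(Q_0)\|_{HS}^2\ge 0$, which gives \eqref{eq:Reflection-Positivity}. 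Cauchy--Schwarz follows in the standard way, by applying positivity to $F+tG$ (matrix entries included) for all $t\in\C$ and using that the resulting sesquilinear form is positive semidefinite.

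\emph{Half-integer hyperplanes.} Here the edges crossing $P$ are perpendicular to it, and the plaquettes crossing $P$ each contain two such crossing edges. Using the gauge invariance hypothesis, we gauge fix every crossing edge to the identity; this is allowed because a gauge transformation on one side can set each crossing edge variable to $1$, and the fixing is valid exactly when the integrand is gauge invariant. After gauge fixing, a crossing plaquette has variable $Q_e Q_{e'}^{-1}$, where $e\in E_+$ and $e'=\Theta_P e$ lie in the two boundary layers. Its weight is then
\begin{equs}
e^{\beta\,\mrm{Re}\,\mrm{Tr}(Q_e(\Theta_P Q_e)^{-1})}.
\end{equs}
We expand this in characters (equivalently, as a power series in $\beta$ with $\beta>0$). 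Each term has the form $\sum_k \mrm{Tr}\big(A_k\,(\Theta_P A_k)^*\big)$ with nonnegative coefficients. This works because $\mrm{Re}\,\mrm{Tr}(U V^{-1})=\sum_{ij}U_{ij}\overline{V_{ij}}$ for real orthogonal $V$, and products of such positive-definite kernels remain positive-definite. The measure therefore becomes a positive combination of terms of the integer-hyperplane form, and the same Hilbert--Schmidt argument applies term by term.

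The main obstacle is the orientation bookkeeping under $\Theta_P$ for edges parallel to $P$, since the reflection reverses their orientation. One has to verify that $\Theta_P S_+$ really reproduces the $-$ side action, and that the conjugate-transpose in $(\Theta_P F)^*$ matches this reversal. For $\mrm{SO}(3)$ the check is simplified by the fact that $U^*=U^{-1}=U^T$.
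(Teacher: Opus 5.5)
Your proposal is correct and follows essentially the same route as the paper. For integer $P$ you condition on the edges in $P$ and obtain a Hilbert--Schmidt norm squared, which is the matrix version of the paper's identity writing the conditional expectation of $F(\Theta_P F)^*$ as a modulus squared. For half-integer $P$ you gauge fix the crossing edges and expand the crossing-plaquette weights into nonnegative combinations of $A\,\overline{\Theta_P A}$, which is exactly the Glimm--Jaffe argument the paper defers to.

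The orientation worry in your last paragraph is unfounded. The geometric reflection fixes edges in $P$ and reverses only edges perpendicular to $P$; this is what makes $\tilde Q_0=Q_0$ and $\Theta_P S_+=S_-$ hold, and the paper's wording about parallel edges is a slip. If you work on a torus, take an even period and put both fixed hyperplanes into $E_0$, or simply use the paper's box symmetric about $P$ with identity boundary conditions.
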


\begin{remark}
    This lemma is almost identical to the union of \cite[Theorem 2.1]{osterwalderseiler1978} and \cite[Theorem 22.3.1]{glimm2012quantum}, except we state the result for matrix-valued observables, and our gauge invariance assumptions are weaker.
\end{remark}

\begin{proof}
    First suppose $P$ is an integer hyperplane and that $F$ is a scalar local observable. Then \eqref{eq:Reflection-Positivity} follows at once by observing that
    \begin{equs}
        \E[F (\Theta_P F)^*) | (Q_e)_{e \subset P}] = |\E[F | (Q_e)_{e \subset P}]|^2
    \end{equs}
    and then integrating over the edge variables for edges in $P$. To make this argument fully rigorous one starts with a finite box with identity boundary conditions which $P$ bisects, and then passes to the infinite volume limit. 

    If $F$ is matrix-valued, we simply expand the trace and apply the scalar version to the entries,
    \begin{equs}
        \langle \mrm{Tr}( F (\Theta_P F)^*)\rangle &= \sum_{i j} \langle F_{ij} ((\Theta_P F)^*)_{ji}\rangle\\
        &=\sum_{i j} \langle F_{ij} (\Theta_P F)_{ij}^*\rangle \geq 0.
    \end{equs}

    Next if $P$ is a half-integer hyperplane and $\mrm{Tr}( F (\Theta_P F)^*)$, we can first gauge fix all edges intersecting $P$ to be the identity. Then the rest of the argument for \eqref{eq:Reflection-Positivity} proceeds exactly as in \cite[Theorem 22.3.1]{glimm2012quantum}.
    
    The proof of 
    \eqref{eq:Cauchy-Schwarz} is the same as the general proof that inner products obey Cauchy-Schwarz. We just note that when $P$ is a half-integer hyperplane, we require that $\mathrm{Tr}(F (\Theta_P F)^*)$, $\mathrm{Tr}(F (\Theta_P G)^*)$, $\mathrm{Tr}(G (\Theta_P F)^*)$, and $\mathrm{Tr}(G (\Theta_P G)^*)$ are gauge invariant so that for any $\lambda \in \C$, 
    \begin{equs}
        \mathrm{Tr}((F+\lambda G)(\Theta_P(F+\lambda G))^*)
    \end{equs}
    is gauge invariant. This allows us to apply reflection positivity to $F+\lambda G$, as required using the standard proof of Cauchy-Schwarz.
\end{proof}

In the remainder of the section let $\ell_{R,T}$ always denote an $R \times T$ oriented rectangular loop on a integer coordinate hyperplane (The exact coordinates of which are not important as the infinite volume Yang--Mills measure is translation invariant). Let's define the function,
\begin{equs}\label{eq:log-Wilson-loop}
    v(R,T):=-\log(\langle W_{\ell_{R,T}}\rangle).
\end{equs}
\begin{lemma}[Rectangular Wilson loop expectations are nonnegative]\label{lm:Wilson-nonnegative}
    For any $R,T$, $\langle W_{\ell_{R,T}}\rangle\in [0,1]$, and as a corollary, the function $v(R,T)$ defined in equation \eqref{eq:log-Wilson-loop} takes values in $[0,\infty]$.
\end{lemma}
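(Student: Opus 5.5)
We need to show $\langle W_{\ell_{R,T}}\rangle \in [0,1]$. The upper bound $|\langle W_\ell\rangle|\le 1$ is immediate, since $W_\ell = \frac13 \mrm{Tr}(Q_\ell)$ and $Q_\ell\in \mrm{SO}(3)$ has $|\mrm{Tr}(Q_\ell)|\le 3$. The trace is also real for $\mrm{SO}(3)$, so the expectation is real for real $\beta$. The content is therefore nonnegativity. We get it from reflection positivity, Lemma \ref{lm:reflec-pos}, applied to a hyperplane that bisects the rectangle.

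Place $\ell_{R,T}$ in the $(x_1,x_2)$-plane with corners $(0,0),(R,0),(R,T),(0,T)$, using translation invariance. The cut is through the middle in the $x_1$-direction. If $R$ is even, take the integer hyperplane $P=\{x_1=R/2\}$; if $R$ is odd, take the half-integer hyperplane $P=\{x_1=R/2\}$. Define the matrix-valued observable $F := Q_\gamma$, where $\gamma$ is the open path from $(R/2,0)$ along the bottom edge to $(0,0)$, up to $(0,T)$, and back along the top edge to $(R/2,T)$. In the odd case the path runs between the points on the nearest vertices $x_1=\lfloor R/2\rfloor$ and includes the crossing edges appropriately. $F$ depends only on edges on one side of $P$.

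The reflected observable $\Theta_P F$ is the parallel transport along the mirror image of $\gamma$. The orientation conventions make edges parallel to $P$ reverse orientation and perpendicular edges map to reversed-direction edges. Hence $(\Theta_P F)^*=(\Theta_PF)^{-1}$ is the transport along the reflected path traversed backwards, i.e. from $(R/2,T)$ through the right half to $(R/2,0)$. Then $\mrm{Tr}(F(\Theta_PF)^*)=\mrm{Tr}(Q_{\ell_{R,T}})$, up to a possible inversion of orientation. Since for $\mrm{SO}(3)$ we have $\mrm{Tr}(Q^{-1})=\mrm{Tr}(Q^T)=\mrm{Tr}(Q)$, orientation does not matter. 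So \eqref{eq:Reflection-Positivity} gives $3\langle W_{\ell_{R,T}}\rangle\ge 0$.

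The main care point is the half-integer case and the bookkeeping of reflected edge orientations. For odd $R$, let $F$ include the two edges crossing $P$ on the bottom and top, but each only "half": define $F$ as the transport along the left half-loop ending at the left endpoints of the crossing edges. Then multiply in the crossing-edge variables so that $F(\Theta_PF)^*$ reconstructs the full loop. Alternatively, follow Lemma \ref{lm:reflec-pos}'s proof and gauge fix the crossing edges to the identity. The needed hypothesis is that $\mrm{Tr}(F(\Theta_PF)^*)$ is gauge invariant, and it is, since it equals a Wilson loop. If $R$ and $T$ are both odd, the same works with either choice of hyperplane.

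Finally, $v(R,T)=-\log\langle W_{\ell_{R,T}}\rangle\in[0,\infty]$ follows directly from $\langle W\rangle\in[0,1]$, with the convention $-\log 0=\infty$.
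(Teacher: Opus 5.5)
Your proposal is correct and is essentially the paper's proof: bisect $\ell_{R,T}$ by the hyperplane $P$ orthogonal to the length-$R$ sides, write $\mathrm{Tr}(Q_{\ell_{R,T}})=\mathrm{Tr}(F(\Theta_P F)^*)$ with $F$ the transport along one half, and apply reflection positivity (Lemma \ref{lm:reflec-pos}). Your additions (the bound $|\mathrm{Tr}\,Q|\le 3$ for the upper end, the orientation-independence of the $\mathrm{SO}(3)$ trace, and the odd-$R$ case) fill in details the paper leaves implicit; for odd $R$, it is gauge-fixing the crossing edges to the identity, as in the proof of Lemma \ref{lm:reflec-pos}, that puts the loop in the form $\mathrm{Tr}(F(\Theta_PF)^*)$, so rely on that route rather than the ``half edge'' description.
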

\begin{proof}

    Let $P$ denote the hyperplane orthogonal to the length $R$ sides of $\ell_{R,T}$ which bisects the rectangle exactly in half. Then we can rewrite $Q_{\ell_{R,T}}=Q_1Q_2$ where $Q_1$ is the product of the edge variables on one side of $P$, and $Q_2$ is the product of the edge variables on the other side of $P$. The key realization is that $Q_2 = (\Theta_P Q_1)^*$ so the result follows immediately from reflection positivity \eqref{eq:Reflection-Positivity}
\end{proof}

The next lemma detailing very general properties of Wilson observables is elementary and not novel \cite{QuarkPotentialConcave}. However, since the statement nor the argument seems to be widely known, we sketch the arguments again.
\begin{lemma}[Properties of the quark-antiquark potential]\label{lm:v-is-concave}
    The function defined in equation \eqref{eq:log-Wilson-loop} satisfies the following properties.
    \begin{enumerate}
        \item $v(R,T)$ is concave in both variables. That is for all $R,T,r,t \in \N$ with $r \leq R$ and $t \leq T$,
        \begin{equs}\label{eq:v-is-concave}
            v(R,T) &\geq \frac{1}{2}(v(R+r,T)+v(R-r,T)),\\
            v(R,T) & \geq \frac{1}{2}(v(R,T+t)+v(R,T-t)).
        \end{equs}
        \item The function $v(R,T)$ is subadditive as a function of $T$ and thus 
        \begin{equs}
            V(R):= \lim_{T \to \infty}\frac{1}{T}v(R,T)
        \end{equs}
        exists. In addition, $V(R)$ is concave.
        \item $v(R,T)$ is non-decreasing in both variables and as a consequence $V(R)$ is also non-decreasing.
    \end{enumerate}
\end{lemma}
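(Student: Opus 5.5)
The plan is to get all three properties from the Cauchy--Schwarz inequality \eqref{eq:Cauchy-Schwarz} of Lemma \ref{lm:reflec-pos}, used exactly as in Lemma \ref{lm:Wilson-nonnegative}, together with the fact that $\langle W_{\ell_{R,T}}\rangle\in[0,1]$.

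\textbf{Concavity.} Fix $R,T$ and $r\le R$, and take a hyperplane $P$ orthogonal to the length-$R$ direction. Place an $(R+r)\times T$ rectangle and an $(R-r)\times T$ rectangle so that both straddle $P$. Choose their positions so that reflecting the first piece of one over $P$ and gluing it to the first piece of the other produces rectangles of width $R$. Concretely, let $F$ be the open path (an $\mathrm{SO}(3)$-matrix-valued observable) on one side of $P$: it runs along a side of length $a$, then a side of length $T$, then a side of length $a$, ending on $P$. Let $G$ be the same construction with length $b$. Then, up to the normalization $\tfrac13$,
\[
\mathrm{Tr}\bigl(F(\Theta_P G)^*\bigr)=\mathrm{Tr}\, Q_{\ell_{a+b,T}} .
\]
The traces $\mathrm{Tr}(F(\Theta_P F)^*)$, $\mathrm{Tr}(F(\Theta_P G)^*)$ and $\mathrm{Tr}(G(\Theta_P G)^*)$ are the loops of widths $2a$, $a+b$ and $2b$, and all of them are gauge invariant. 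So Cauchy--Schwarz gives
\[
\langle W_{\ell_{a+b,T}}\rangle^2\le \langle W_{\ell_{2a,T}}\rangle\,\langle W_{\ell_{2b,T}}\rangle .
\]
Taking logarithms yields $v(a+b,T)\ge \tfrac12\bigl(v(2a,T)+v(2b,T)\bigr)$. Setting $2a=R+r$ and $2b=R-r$ gives the first line of \eqref{eq:v-is-concave} when $R+r$ is even. Integer $a$ corresponds to an integer hyperplane and half-integer $a$ to a half-integer hyperplane, which is allowed because every trace involved is gauge invariant. This handles all parities. The $T$ direction is symmetric. If some expectation vanishes, so that $v=\infty$, the inequality reads correctly with the convention $\infty\ge\infty$.

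\textbf{Subadditivity and the limit.} I expect this to be the main obstacle, because concavity alone does not give subadditivity. The plan is to combine concavity with $v\ge 0$ and the extension $v(R,0)=0$, since a degenerate loop has $W=1$. A concave function of $T$ on $\mathbb N\cup\{0\}$ with $v(R,0)=0$ satisfies $v(R,T_1+T_2)\le v(R,T_1)+v(R,T_2)$, which is the standard fact for concave functions vanishing at the origin. For this I need the concavity inequality at $t=T$, which is the case where the degenerate loop enters. Fekete's lemma then gives existence of $V(R)=\lim_T v(R,T)/T$. Concavity of $V$ follows by dividing the first line of \eqref{eq:v-is-concave} by $T$ and letting $T\to\infty$.

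\textbf{Monotonicity.} I will show that a nonnegative concave function on $\mathbb N$ is non-decreasing. Suppose instead $v(R+1,T)<v(R,T)$. Concavity forces the increments to stay at most this negative value, so $v$ would eventually become negative, contradicting Lemma \ref{lm:Wilson-nonnegative}. This needs care when some values of $v$ are infinite. If $v(R,T)=\infty$ somewhere, concavity propagates the infinity outward, and I would handle that case separately. Finally, $V$ inherits monotonicity by passing to the limit.
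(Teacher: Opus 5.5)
Your proposal is correct and follows essentially the paper's route. Concavity comes from Cauchy--Schwarz via reflection positivity across the hyperplane splitting the rectangle into widths $\frac{R-r}{2}$ and $\frac{R+r}{2}$, integer or half-integer according to parity; then concavity $\Rightarrow$ subadditivity $\Rightarrow$ Fekete, and concave plus bounded below $\Rightarrow$ non-decreasing. Your explicit use of the degenerate value $v(R,0)=0$ (the case $t=T$) to get subadditivity, and your attention to possibly infinite $v$, make precise two points the paper passes over with its ``general fact'' and its assertion that $v$ is real-valued.
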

\begin{figure}  \centering
    \includegraphics[width=16cm,height=5cm]{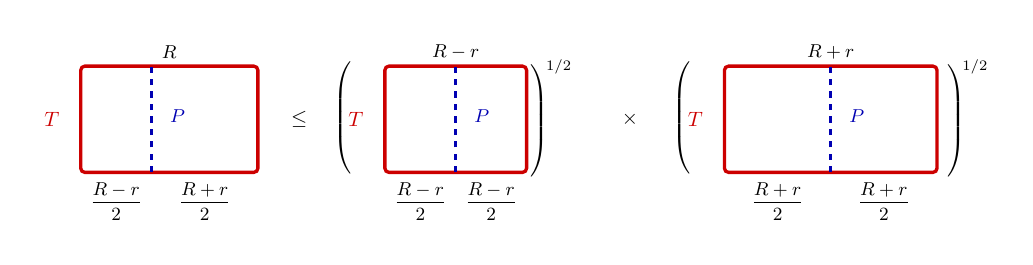}
    \caption{The concavity inequality of Lemma \ref{lm:v-is-concave}(1) is displayed above. More precisely the figure represents the inequality $\langle W_{\ell_{R,T}}\rangle\leq\langle W_{\ell_{R-r,T}}\rangle^{1/2}\langle W_{\ell_{R+r,T}}\rangle^{1/2}$. In the notation of the proof, $W_{\ell_{R,T}}=\mrm{tr}(Q_1Q_2)$, $W_{\ell_{R-r,T}}= \mrm{tr}(Q_1(\Theta_P Q_1)^*)$, and $W_{\ell_{R+r,T}}= \mrm{tr}(Q_2(\Theta_P Q_2)^*)$.}
    \label{fig:v-concave}
    \end{figure}
\begin{proof}
    \textit{Proof of (1):} Given an $R \times T$ rectangular loop $\ell_{R,T}$ on a coordinate hyperplane, let $P$ denote the hyperplane which is perpendicular to the length $R$ sides of the rectangle and split these sides into length $\frac{R-r}{2}$ and $\frac{R+r}{2}$ segments. It may help the reader to refer to Figure \ref{fig:v-concave}. Let $Q_1$ be the product of the edge variables going along the loop on the side of $P$ corresponding to the length $\frac{R-r}{2}$ segment, and $Q_2$ the product of the edge variables along the loop on the other side of $P$ in the same orientation. Note that
    \begin{equs}
        ~&W_{\ell_{R,T}}=\frac{1}{N}\mrm{Tr}(Q_1(\Theta_P Q_2)^*)=\frac{1}{N}\mrm{Tr}(Q_2(\Theta_P Q_1)^*)\\
        &W_{\ell_{R-r,T}} = \frac{1}{N}\mrm{Tr}(Q_1(\Theta_P Q_1)^*),\\
        &W_{\ell_{R+r,T}}= \frac{1}{N}\mrm{Tr}(Q_2(\Theta_P Q_2)^*)
    \end{equs}
    and these observables are all of course gauge invariant. Moreover, the transformation $Q \to (\Theta_PQ)^*$ is an involution so applying the Cauchy-Schwarz inequality \eqref{eq:Cauchy-Schwarz} to $F(Q)=Q_1$ and $G(Q)=(\Theta_PQ_2)^*$, we have,
    \begin{equs}
        \langle W_{\ell_{R,T}}\rangle \leq \langle W_{\ell_{R-r,T}}\rangle^{1/2}\langle W_{\ell_{R+r,T}}\rangle^{1/2}.
    \end{equs}
    Taking the log of both sides immediately yields the inequality in the first line of \eqref{eq:v-is-concave}. The second inequality has the same proof using a hyperplane orthogonal to the length $T$ side of the rectangle.

    \textit{Proof of (2):} It is a general fact and easy exercise that concavity \eqref{eq:v-is-concave} for functions on the natural numbers implies sub-additivity, and similarly that for a sub-additive a function on the natural numbers $f:\N \to \R$, $\lim_{n \to \infty }\frac{f(n)}{n}$ exists. Concavity immediately passes to the limit so $V(R)$ is concave.\\

    \textit{Proof of (3):} For lemma \ref{lm:Wilson-nonnegative}, $v(R,T)$ is a real valued function bounded below by $0$, and as a result the same holds for $V(R)$. Now it is once again easy to prove the general fact that concave functions on the natural numbers which are bounded from below are non-decreasing.
\end{proof}

We are finally ready to upgrade Lemma \ref{lm:single-rectangle-bound} to Theorem \ref{thm:Main-theorem}.
\begin{proof}[Proof of Theorem \ref{thm:Main-theorem}]
    Fix any $R \in \N$ and consider a sequence of rectangular loops $\ell_{R,1}, \ell_{R,2},\ell_{R,3},\dots$. Now fixing $\epsilon \in (0,\beta^*)$, assuming Lemma \ref{lm:single-rectangle-bound}, pick the set  $A_{\epsilon,\ell_{R,T}} \subset (0,\beta^*)$ specified by the lemma, then the set of $\beta$ such that for infinitely many values of $T$,
     \begin{equs}
         \langle W_{\ell_{R,T}} \rangle \geq c(\beta,d,\epsilon)^{|\ell_{R,n}|}
     \end{equs}
     contains
     \begin{equs}
         A_{\epsilon}:=\bigcap_{T \geq 1} \bigcup_{n \geq T} A_{\epsilon,\ell_{R,T}}
     \end{equs}
     which has measure at least $\beta^*-\epsilon$ by continuity of measure. Equivalently on this set, for infinitely many $T$
     \begin{equs}
         \frac{1}{T}v(R,T) \leq 2c(\beta,d,\epsilon),
     \end{equs}
     and as a result
     \begin{equs}
         V(R)= \lim_{T \to \infty}\frac{1}{T}v(R,T) \leq 2c(\beta,d,\epsilon).
     \end{equs}
     Importantly the constant $c(\beta,d,\epsilon)$ has no dependence on $R$. Now running the same argument, this time with $R$ instead of $T$, Let $B_{\epsilon}$ be the set of $\beta$ values for which 
     \begin{equs}
         V(R) \leq 2c(\beta,d,\epsilon)
     \end{equs}
     for infinitely many values of $R$. Then $B_{\epsilon}$ has measure at least $\beta^*(d)-\epsilon$. On the other hand $V(R)$ is non-decreasing, so it immediately follows that $V(R) \leq 2c(\beta,d,\epsilon)$ for all values of $R$ whenever $\beta \in B_{\epsilon}$. The desired result follows since we can take $\epsilon$ as small as we want.
\end{proof}

\section{Comments about excitations to the tube cluster}\label{section:comments-on-cluster-expansion}

In this section, we briefly comment as to why the main result Theorem \ref{thm:Main-theorem}, which requires a lower bound on Wilson loop observables is not obvious directly from the cluster expansion as argued in \cite{seiler1982gauge} and \cite{GlimmJaffeNonConfinement}. If we write down the \cite{osterwalderseiler1978} cluster expansion for $\langle W_{\ell} \rangle_{\Lambda_L,\beta}$, we have
\begin{equs}
    \langle W_{\ell} \rangle_{\Lambda_L,\beta} = \sum_{\substack{\mc{C} \subset \mc{P}_{\Lambda_L}\\ \mc{C} \cup \ell \text{ connected}}} \int W_{\ell}(Q) \prod_{p \in \mc{C}} (e^{\beta \mrm{Tr}(Q_p)}-1) dQ \frac{Z_{\Lambda_L \backslash \mc{C}}}{Z_{\Lambda_L}}.
\end{equs}
Now from the discussion of subsection \ref{Section:Step-1}, if we restrict the sum to clusters $\mc{C}$ of size $|\mc{C}| \leq 4|\ell|-16$, then the leading order term in the Taylor expansion only comes from the $2(d-2)$ tubes as depicted in Figure \ref{fig:tube-diagram}. It doesn't directly follow from the argument subsection \ref{Section:Step-1} that all other clusters $\mc{C}$ with $|\mc{C}|<(4|\ell|-16)$, for example those with tubes having pinched corners, don't contribute to higher order terms of the Taylor expansion. Nevertheless suppose we could prove this were true as suggested in \cite{GlimmJaffeNonConfinement,seiler1982gauge}. Then following the strategy of \cite{GlimmJaffeNonConfinement,seiler1982gauge}, one would still need to show that 

\begin{equs}
     \sum_{\substack{|\mc{C}| >4|\ell|-16\\ \mc{C} \cup \ell \text{ connected}}} \bigg|\int W_{\ell}(Q) \prod_{p \in \mc{C}} (e^{\beta \mrm{Tr}(Q_p)}-1) dQ \frac{Z_{\Lambda_L \backslash \mc{C}}}{Z_{\Lambda_L}} \bigg|
\end{equs}
is smaller than the contribution from the $2(d-2)$ tube clusters. Let $T$ be one of the minimal tube clusters, and let $\mc{T}$ be the family of clusters which are formed by removing one plaquette on the surface of $T$ which does not touch $\ell$, and replacing it with $5$ additional plaquettes which are the other plaquettes in a size $1$ cube containing the removed plaquette. We have $|\mc{T}| = O(|\ell|)$, and each $T' \in \mc{T}$ only has $4$ more plaquettes than $T$. One should not expect that the terms corresponding to such clusters are $0$ since Taylor expanding the leading order term of order $\beta^{4|\ell|-12}$ is nonzero. To see this apply the integration rule Lemma \ref{lm:double-edge-integration} to the edges on the newly added plaquettes in the integral $\int W_{\ell}(Q) \prod_{p \in \mc{C}} \mrm{Tr}(Q_p) dQ$, up to a constant independent of $\ell$, this integral simply reduces to the tube integral considered in Proposition \ref{lm:Tube-Lemma}. As a result one would expect that actually 
\begin{equs}
    \sum_{\substack{|\mc{C}| =4|\ell|-12\\ \mc{C} \cup \ell \text{ connected}}} \bigg|\int W_{\ell}(Q) \prod_{p \in \mc{C}} (e^{\beta \mrm{Tr}(Q_p)}-1) dQ \frac{Z_{\Lambda_L \backslash \mc{C}}}{Z_{\Lambda_L}} \bigg|
\end{equs}
to be much larger than the contribution coming from the tube cluster, roughly by a factor $|\ell|\beta^4$. As a result it is not clear that the tube clusters dominate the cluster expansion, and one is therefore required to prove that the terms further out in the expansion don't magically cancel out with the leading term. The same exact argument applies to the cluster expansion considered by \cite{seiler1982gauge}.

\section{Comments about $N>3$ and large $N$ limit}\label{Section:large-N}

While the arguments of this paper should generalize to $\mrm{SO}(2N+1)$ for $N >1$, it should be noted that the tube clusters considered in Lemma \ref{lm:Tube-Lemma} will not contribute in this case. Instead the tube will need to have many copies of each plaquette due to the following lemma which is easy to prove using invariance of the Haar measure by left multiplication via a diagonal matrix with two $-1$ entries and $1$ in the remaining diagonal entries.
\begin{lemma}
    Suppose $k<N$ and both $k$ and $N$ are odd. Then if $Q \sim \mrm{Haar}(\mrm{SO}(N))$,
    \begin{equs}
        \E[Q^{\otimes k}] = 0.
    \end{equs}
\end{lemma}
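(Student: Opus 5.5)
The plan is a symmetry argument. Let $D=\mathrm{diag}(-1,-1,1,\dots,1)$, an element of $\mathrm{SO}(N)$ because its determinant is $+1$. Haar measure is invariant under left multiplication, so $DQ$ has the same law as $Q$. Therefore
\begin{equs}
    \E[Q^{\otimes k}]=\E[(DQ)^{\otimes k}]=D^{\otimes k}\,\E[Q^{\otimes k}],
\end{equs}
and the same identity holds with $D$ replaced by any diagonal matrix $D_{ab}$ that has $-1$ in two chosen positions $a\neq b$ and $1$ elsewhere.

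Next I read off the entries. Write $M=\E[Q^{\otimes k}]$, with entries
\begin{equs}
    M_{(i_1,\dots,i_k),(j_1,\dots,j_k)}=\E[Q_{i_1j_1}\cdots Q_{i_kj_k}].
\end{equs}
Applying $D_{ab}$ on the left multiplies this entry by $(-1)^{n_a(i)+n_b(i)}$, where $n_a(i)$ is the number of row indices equal to $a$. So the entry vanishes unless $n_a(i)\equiv n_b(i)\pmod 2$ for every pair $a\neq b$. In other words, all the counts $n_1(i),\dots,n_N(i)$ must have the same parity.

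The parity cases are then immediate.

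\emph{All counts odd.} Each index $a\in\{1,\dots,N\}$ must then appear at least once, so $k=\sum_a n_a(i)\geq N$. This contradicts $k<N$.

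\emph{All counts even.} Then $k=\sum_a n_a(i)$ is even, contradicting $k$ odd.

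Hence every entry of $M$ vanishes, i.e.\ $\E[Q^{\otimes k}]=0$.

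The only point needing care is that $N\geq 2$, so that a matrix $D_{ab}$ exists. Here $N$ is odd and larger than $k\geq 1$, so $N\geq 3$, and this is automatic. I expect no real obstacle in this argument. The oddness of $N$ is in fact not needed beyond this (the argument uses only $k$ odd and $k<N$), but it matches the intended $\mathrm{SO}(2N+1)$ application.
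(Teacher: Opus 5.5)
Your proof is correct and is the argument the paper indicates: left-invariance of Haar measure under diagonal matrices with two $-1$ entries forces all row-index counts to have the same parity, and that is impossible when $k$ is odd and $k<N$. Your remark that the oddness of $N$ is not needed is also correct.
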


Despite the expected lack of confinement for $\mrm{SO}(2N+1)$ lattice Yang--Mills theory, Chatterjee proved that the large $N$ limit for $\mrm{SO}(N)$ lattice Yang--Mills theory does satisfy an area law \cite{Chatterjee2019a}. One way to reconcile this fact, is to consider the truncated model as in \cite{CNS2025a} for which the Wilson action Yang--Mills measure is modified by replacing the product of exponentials with a product of finite Taylor polynomials for the exponential of degree $B$ satisfying $N\beta \ll B \ll N$. This model will have an area law even for gauge group $\mrm{SO}(2N+1)$ with a nearly identical proof to that in \cite{CNS2025a} based on the master loop equation. The proof for the original Wilson action model given in \cite{CNS2025a} would break down at the `revival' operation step where center symmetry is crucial.

\appendix
\section{Some $\mathrm{Haar}(\mathrm{SO}(3))$ expectation values}\label{Appendix:Expectation-Values}

In this section we record a few $\mathrm{Haar}(\mathrm{SO}(3))$ moment calculations which are used in the proof of Lemma \ref{lm:Tube-Lemma}.
\begin{lemma}\label{lm:Haar-SO(3)-Moments}
    Suppose $Q\sim \mathrm{Haar}(\mathrm{SO}(3))$. Then,
    \begin{equs}
        \E[Q_{ij}]=0,
    \end{equs}
    \begin{equs}
        \E[Q_{i_1j_1}Q_{i_2j_2}]=\frac{1}{3}\delta_{(i_1,j_1),(i_2,j_2)},
    \end{equs}
    and
    \begin{equs}
        \E[Q_{i_1j_1}Q_{i_2j_2}Q_{i_3 j_3}] = \begin{cases}            \frac{\mathrm{sgn}(\sigma)}{6} \hspace{3mm} \text{if } \{i_1,i_2,i_3\}=\{1,2,3\} \text{ and } \sigma(i_k)=j_k \text{ for } k=1,2,3 \text{ and some } \sigma \in S_3.\\
            0 \hspace{3mm} \text{otherwise}
        \end{cases}
    \end{equs}
\end{lemma}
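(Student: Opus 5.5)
We prove the three moment identities separately, using only two symmetries of Haar measure on $\mathrm{SO}(3)$: invariance under left and right multiplication by fixed elements of $\mathrm{SO}(3)$, together with the identity $\det Q = 1$.

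\emph{First moment.} Left multiplication by the diagonal matrix $D=\mathrm{diag}(\pm1,\pm1,\pm1)$ with exactly two entries equal to $-1$ preserves Haar measure. Choose $D$ with $D_{ii}=-1$. Then $\E[Q_{ij}]=\E[(DQ)_{ij}]=-\E[Q_{ij}]$, so $\E[Q_{ij}]=0$.

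\emph{Second moment.} We apply the same sign-flip argument on both sides. Left multiplication by such a $D$ multiplies $Q_{i_1j_1}Q_{i_2j_2}$ by $D_{i_1i_1}D_{i_2i_2}$. If $i_1\neq i_2$, pick $D$ with $D_{i_1i_1}=-1$ and $D_{i_2i_2}=1$. This is possible because three indices are available, so we can put the second $-1$ on the third index. The expectation therefore vanishes unless $i_1=i_2$. By right multiplication, it likewise vanishes unless $j_1=j_2$.

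For the diagonal case, invariance under permutation matrices (signed if necessary to lie in $\mathrm{SO}(3)$) on both sides shows that $\E[Q_{ij}^2]$ does not depend on $(i,j)$. Orthogonality gives $\sum_j Q_{ij}^2=1$, so $\E[Q_{ij}^2]=1/3$.

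\emph{Third moment.} Apply the sign flips again, now to a product of three entries. Left multiplication by $D$ multiplies the product by $D_{i_1i_1}D_{i_2i_2}D_{i_3i_3}$. Suppose some index value $a\in\{1,2,3\}$ occurs an odd number of times among $(i_1,i_2,i_3)$ while some other value $b$ occurs an even number of times (possibly zero). Take $D$ with $-1$ in positions $a,b$. This flips the sign, so the expectation vanishes.

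The only multiplicity patterns of three indices are $(3,0,0)$, $(2,1,0)$ and $(1,1,1)$. The first two each contain an odd count and an even count, so they give zero. Hence a nonzero expectation forces $\{i_1,i_2,i_3\}=\{1,2,3\}$, and symmetrically $\{j_1,j_2,j_3\}=\{1,2,3\}$. Thus $j_k=\sigma(i_k)$ for a unique $\sigma\in S_3$.

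To evaluate the surviving terms, expand the determinant:
\[
1=\det Q=\sum_{\tau\in S_3}\mathrm{sgn}(\tau)\,Q_{1\tau(1)}Q_{2\tau(2)}Q_{3\tau(3)}.
\]
Taking expectations gives $1=\sum_\tau \mathrm{sgn}(\tau)\,\E[Q_{1\tau(1)}Q_{2\tau(2)}Q_{3\tau(3)}]$. We now show each summand $\mathrm{sgn}(\tau)\,\E[\cdots]$ is the same.

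Right multiplication by a permutation matrix $P_\pi$ permutes the columns. If $\pi$ is even, $P_\pi\in\mathrm{SO}(3)$. If $\pi$ is odd, $-P_\pi\in\mathrm{SO}(3)$, and the extra factor $(-1)^3=-1$ from the three entries produces exactly the sign $\mathrm{sgn}(\pi)$. Therefore
\[
\E[Q_{1\tau(1)}Q_{2\tau(2)}Q_{3\tau(3)}]=\mathrm{sgn}(\tau)\,\E[Q_{11}Q_{22}Q_{33}].
\]
Substituting into the determinant identity gives $6\,\E[Q_{11}Q_{22}Q_{33}]=1$, so the value is $\mathrm{sgn}(\sigma)/6$. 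This is independent of the order of the three factors, since the factors commute.

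The step requiring the most care is the third moment: ruling out the repeated-index patterns by sign flips, and tracking the sign introduced when odd permutations are realized via $-P_\pi$. Everything else is routine.
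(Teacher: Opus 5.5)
Your proof is correct and follows the same route as the paper. Diagonal sign flips in $\mathrm{SO}(3)$ rule out every index pattern other than a permutation; permutation symmetry together with $\sum_j Q_{ij}^2=1$ gives the second moment; and the expansion of $\det Q=1$ gives the third.

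Your version is slightly tighter than the paper's in two places. First, using $-P_\pi$ for odd $\pi$ is genuinely needed: even permutation matrices alone preserve the parity of $\sigma$, so they cannot relate odd-$\sigma$ moments to $\E[Q_{11}Q_{22}Q_{33}]$. Second, your odd/even multiplicity argument correctly covers the case $i_1=i_2=i_3$, where the paper's claim that ``exactly one of the three factors changes sign'' cannot literally be achieved.
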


\begin{proof}
Let $Q=(Q_{ij})_{1\le i,j\le 3}\sim \mathrm{Haar}(\mathrm{SO}(3))$.

We will repeatedly use the following fact: if
\[
D=\mathrm{diag}(\varepsilon_1,\varepsilon_2,\varepsilon_3)\in \mathrm{SO}(3),
\qquad \varepsilon_r\in\{\pm 1\},\quad \varepsilon_1\varepsilon_2\varepsilon_3=1,
\]
then $DQ$ and $QD$ have the same law as $Q$.  In particular, multiplying on the left flips the signs of two rows, and multiplying on the right flips the signs of two columns.

We will also use that Haar measure is invariant under left/right multiplication by permutation matrices in $\mathrm{SO}(3)$, i.e. by even permutation matrices.

\medskip

\textbf{First moment.}
Fix $i,j$. Choose $D\in \mathrm{SO}(3)$ diagonal with $D_{ii}=-1$ (and hence exactly one other diagonal entry also equal to $-1$). Then
\[
(DQ)_{ij}=-Q_{ij}.
\]
Since $DQ\stackrel{d}=Q$, we get
\[
\E[Q_{ij}]=\E[(DQ)_{ij}]=-\E[Q_{ij}],
\]
hence
\[
\E[Q_{ij}]=0.
\]

\medskip

\textbf{Second moment.}
Let
\[
M_{i_1j_1,i_2j_2}:=\E[Q_{i_1j_1}Q_{i_2j_2}].
\]
We first show that this vanishes unless $i_1=i_2$ and $j_1=j_2$.

If $i_1\neq i_2$, choose $D=\mathrm{diag}(\varepsilon_1,\varepsilon_2,\varepsilon_3)\in \mathrm{SO}(3)$ with
$\varepsilon_{i_1}=-1$ and $\varepsilon_{i_2}=1$. Then
\[
(DQ)_{i_1j_1}(DQ)_{i_2j_2}
=
-\,Q_{i_1j_1}Q_{i_2j_2}.
\]
By invariance of law under left multiplication,
\[
M_{i_1j_1,i_2j_2}
=
-\,M_{i_1j_1,i_2j_2},
\]
so $M_{i_1j_1,i_2j_2}=0$. An identical argument with right invariance shows this expectation is also $0$ unless $j_1=j_2$.

Thus only the case $(i_1,j_1)=(i_2,j_2)$ can survive. By invariance under even row and column permutations, the value of $\E[Q_{ij}^2]$ is does not depend on $i$ or $j$.

Now for any fixed row $i$,
\[
\sum_{j=1}^3 Q_{ij}^2=1
\]
since $Q$ is orthogonal. So taking expectations,
\[
\E[Q_{i_1j_1}Q_{i_2j_2}]
=
\frac13\,\delta_{i_1,i_2}\delta_{j_1,j_2}.
\]

\medskip

\textbf{Third moment.}
Set
\[
T_{i_1j_1,i_2j_2,i_3j_3}
:=
\E[Q_{i_1j_1}Q_{i_2j_2}Q_{i_3j_3}].
\]

We first determine when this can be nonzero. Suppose some row index appears twice or three times among $i_1,i_2,i_3$. Then one can choose
$D=\mathrm{diag}(\varepsilon_1,\varepsilon_2,\varepsilon_3)\in \mathrm{SO}(3)$ so that exactly one of the three factors
$(DQ)_{i_k j_k}$ changes sign. Hence
\[
T_{i_1j_1,i_2j_2,i_3j_3}
=
-\,T_{i_1j_1,i_2j_2,i_3j_3},
\]
and therefore it is $0$.

So nonzero third moments require $i_1,i_2,i_3$ to be pairwise distinct. The same argument on the right shows that nonzero third moments also require $j_1,j_2,j_3$ to be pairwise distinct. Hence $T_{i_1j_1,i_2j_2,i_3j_3}$ can be nonzero only if both $(i_1,i_2,i_3)$ and $(j_1,j_2,j_3)$ are permutations of $(1,2,3)$. 

Now assume this is the case. Then there is a unique permutation $\sigma\in S_3$ such that
\[
j_k=\sigma(i_k),\qquad k=1,2,3.
\]
By left/right multiplication with even permutation matrices, any such moment is reduced to
\[
\E[Q_{11}Q_{22}Q_{33}]
\]
up to the sign of the induced permutation. More precisely, there is a constant $c$ such that
\[
T_{i_1j_1,i_2j_2,i_3j_3}
=
c\,\mrm{sgn}(\sigma).
\]

It remains to compute $c$. Using $\det Q=1$, taking expectations, and using the symmetry just established, each term has expectation equal to $c$ times its permutation sign. Since the determinant expansion already carries that sign, each of the six terms contributes $+c$ after expectation. Hence
\[
1=6c.
\]
Therefore
\[
\E[Q_{i_1j_1}Q_{i_2j_2}Q_{i_3j_3}]
=
\begin{cases}
\frac{1}{6}\,\mrm{sgn}(\sigma),& \text{if } i_1,i_2,i_3 \text{ are distinct, } j_1,j_2,j_3 \text{ are distinct,}\\
& \text{and } j_k=\sigma(i_k)\text{ for some }\sigma\in S_3,\\[1ex]
0,&\text{otherwise.}
\end{cases}
\]
This is exactly the claimed formula.
\end{proof}

\begin{lemma}\label{lm:Haar-zero-integral}
    Fix a nonnegative integer $k$ and suppose $Q_1,\dots,Q_k \sim \mrm{Haar}(\mrm{SO}(3))$ are all independent of each other. Then for any indices $i,j \in [3]$, subset of pairs $P \subset [k]^2$, and collection on nonnegative integers $\{\ell_{(a,b)}\}_{(a,b) \in P}$, we have,
    \begin{equs}
        \E\bigg[(Q_1)_{i,j} \prod_{(a,b) \in P} \mrm{Tr}(Q_a Q_b^T)^{\ell_{(a,b)}}\bigg]=0 = 0
    \end{equs}
\end{lemma}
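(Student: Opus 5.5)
The plan is to use a sign-flip symmetry. Left-multiply only the single variable $Q_1$ by a diagonal matrix $D\in\mrm{SO}(3)$ and simultaneously right-multiply, so that every trace factor $\mrm{Tr}(Q_aQ_b^T)$ is unchanged while the entry $(Q_1)_{ij}$ changes sign. This is the same mechanism as in the first-moment computation of Lemma \ref{lm:Haar-SO(3)-Moments}.

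Concretely, fix $i,j$ and choose $D=\mrm{diag}(\varepsilon_1,\varepsilon_2,\varepsilon_3)\in\mrm{SO}(3)$ with $\varepsilon_i=-1$; one other diagonal entry must then also be $-1$. Consider the map
\begin{equs}
(Q_1,\dots,Q_k)\mapsto (DQ_1,DQ_2,\dots,DQ_k).
\end{equs}
Since the $Q_a$ are independent and Haar measure is left invariant, the joint law is preserved. For every pair $(a,b)$ we have $\mrm{Tr}(DQ_a (DQ_b)^T)=\mrm{Tr}(DQ_aQ_b^TD^T)=\mrm{Tr}(Q_aQ_b^T)$, using $D^TD=I$ and cyclicity of the trace. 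Hence every factor $\mrm{Tr}(Q_aQ_b^T)^{\ell_{(a,b)}}$ is invariant. Meanwhile $(DQ_1)_{ij}=\varepsilon_i (Q_1)_{ij}=-(Q_1)_{ij}$.

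Therefore the expectation $E$ of the product equals $-E$, so $E=0$. All random variables involved are bounded, so integrability is automatic. The degenerate case $k$ where $Q_1$ is absent does not arise, since $Q_1$ appears in the statement, so $k\ge1$.

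The main obstacle, insofar as there is one, is making sure the trace factors are genuinely invariant. That requires applying the same $D$ to \emph{all} $Q_a$ simultaneously and on the same side. If one flipped only $Q_1$, the factors $\mrm{Tr}(Q_1Q_b^T)$ would not be preserved. With the simultaneous left action, including the possibility $a=b$ and pairs not involving $1$, every factor is invariant and the argument closes.
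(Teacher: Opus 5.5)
Your proof is correct and is essentially the paper's argument: act on all of $Q_1,\dots,Q_k$ simultaneously by one diagonal sign matrix $D\in\mathrm{SO}(3)$, so each factor $\mathrm{Tr}(Q_aQ_b^T)$ is unchanged while $(Q_1)_{ij}$ changes sign. The paper multiplies on the right, $Q_a\mapsto Q_aD$ (which needs $D_{jj}=-1$, not the $D_{ii}=-1$ it writes), while you multiply on the left with $D_{ii}=-1$, a purely cosmetic difference; you should delete your opening sentence about left-multiplying only $Q_1$ and ``simultaneously right-multiplying,'' since it contradicts the correct argument that follows.
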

\begin{proof}
    Choose $D$ a $ 3\times 3$ diagonal matrix with a $-1$ entry in the $i$th row and a $-1$ and $1$ entry in the other two rows. Then $D \in \mrm{SO}(3)$, so by right invariance of the Haar measure,
    \begin{equs}            \E\bigg[(Q_1)_{i,j} \prod_{(a,b) \in P} \mrm{Tr}(Q_a Q_b^T)^{\ell_{(a,b)}}\bigg] &= \E\bigg[(Q_1 D)_{i,j} \prod_{(a,b) \in P} \mrm{Tr}(Q_a D D^T Q_b^T)^{\ell_{(a,b)}}\bigg]\\
    &=-\E\bigg[(Q_1)_{i,j} \prod_{(a,b) \in P} \mrm{Tr}(Q_a Q_b^T)^{\ell_{(a,b)}}\bigg]
    \end{equs}
    and hence $\E\bigg[(Q_1)_{i,j} \prod_{(a,b) \in P} \mrm{Tr}(Q_a Q_b^T)^{\ell_{(a,b)}}\bigg]=0$.
\end{proof}

\begin{lemma}\label{lm:double-edge-integration}
    For any matrices $A,B \in \R^3$, if $Q \sim \mrm{Haar}(\mrm{SO}(3))$, then
    \begin{equs}
        \E[\mrm{Tr}(AQ)\mrm{Tr}(Q^*B)]=\frac{1}{3}\mrm{Tr}(AB)
    \end{equs}
\end{lemma}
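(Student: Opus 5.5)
The plan is a direct computation: expand both traces in matrix entries of $Q$ and reduce everything to the second moment formula of Lemma \ref{lm:Haar-SO(3)-Moments}. Since $Q\in\mrm{SO}(3)$ is real, $Q^*=Q^T$. Both sides of the identity are bilinear in $(A,B)$, so the argument below works verbatim for complex $3\times 3$ matrices as well. I interpret the hypothesis $A,B\in\R^3$ as $A,B\in\R^{3\times 3}$.

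First I will write out the two traces:
\begin{equs}
    \mrm{Tr}(AQ)=\sum_{i,j=1}^3 A_{ji}Q_{ij},\qquad \mrm{Tr}(Q^TB)=\sum_{k,l=1}^3 (Q^T)_{lk}B_{kl}=\sum_{k,l=1}^3 Q_{kl}B_{kl}.
\end{equs}
Multiplying these and using linearity of expectation gives
\begin{equs}
    \E[\mrm{Tr}(AQ)\mrm{Tr}(Q^TB)]=\sum_{i,j,k,l}A_{ji}B_{kl}\,\E[Q_{ij}Q_{kl}].
\end{equs}

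Next I will invoke the second moment identity $\E[Q_{ij}Q_{kl}]=\frac13\delta_{(i,j),(k,l)}$ from Lemma \ref{lm:Haar-SO(3)-Moments}. This collapses the quadruple sum to $\frac13\sum_{i,j}A_{ji}B_{ij}$, and that sum equals $\frac13\mrm{Tr}(AB)$, which is the claim.

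There is no real obstacle, since all of the content sits in the second moment computation, which is already established. The only step needing care is the index bookkeeping: the transpose in $Q^*$ is exactly what turns $\mrm{Tr}(Q^TB)$ into the entrywise pairing $\sum Q_{kl}B_{kl}$. That pairing is what makes the delta contract against the index pattern $A_{ji}$ and produce $\mrm{Tr}(AB)$, rather than a different contraction such as $\mrm{Tr}(AB^T)$.
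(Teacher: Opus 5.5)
Your proof is correct and follows the paper's argument: expand both traces entrywise, then contract using the second-moment identity $\E[Q_{ij}Q_{kl}]=\frac13\delta_{(i,j),(k,l)}$ from Lemma~\ref{lm:Haar-SO(3)-Moments}. Your bookkeeping, with $\mrm{Tr}(AQ)=\sum A_{ji}Q_{ij}$ and $\mrm{Tr}(Q^TB)=\sum Q_{kl}B_{kl}$, is actually cleaner than the paper's displayed expansion, which transposes the $Q$-indices in both factors; that slip is harmless because $\delta_{(i,j),(k,l)}=\delta_{(j,i),(l,k)}$.
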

\begin{proof}
    The proof follows by directly expanding and applying the second moment computation of Lemma \ref{lm:Haar-SO(3)-Moments},
    \begin{equs}
         \E[\mrm{Tr}(AQ)\mrm{Tr}(Q^T B)] &= \sum_{i,j,i',j'=1}^{3} A_{ij}B_{j' i'} \E[Q_{ij}Q_{i'j'}]\\
         &=\sum_{i,j,i',j'=1}^{3} A_{ij}B_{j' i'} \frac{\delta_{(i,j),(i',j')}}{3} = \frac{1}{3}\mrm{Tr}(AB).
    \end{equs}
\end{proof}

\bibliographystyle{alpha}
\bibliography{references}

\end{document}